\definecolor{lightblue}{rgb}{0.57,0.85,0.96}
\definecolor{lightgray}{rgb}{0.75,0.75,0.75}
\definecolor{lightred}{rgb}{1,0.6,0.6}
\definecolor{lightgreen}{rgb}{0.77,1,0.28}
\definecolor{lightorange}{rgb}{1,0.74,0.1}
\definecolor{lightpurple}{rgb}{0.92,0.67,0.92}
\definecolor{lightbrown}{rgb}{1,0.74,0.4}
\newcolumntype{L}{>{\arraybackslash}p{0.8cm}}
\newcolumntype{P}[1]{>{\centering\arraybackslash}p{#1}}
\title{Constructive covers of a finite set}
\author{\c{C}a\u{g}{\i}n Ararat\thanks{Bilkent University, Department of Industrial Engineering, Ankara, Turkey, cararat@bilkent.edu.tr.}\and  \"{U}lk\"{u} G\"{u}rler\thanks{Bilkent University, Department of Industrial Engineering and Faculty of Business Administration, Ankara, Turkey, ulku@bilkent.edu.tr.}\and M.~Emrullah Ild{\i}z\thanks{Middle East Technical University, Department of Electrical and Electronics Engineering, Ankara, Turkey, emrullah.ildiz@metu.edu.tr.}
}
\date{July 2, 2020}
\makeatletter \renewenvironment{proof}[1][\proofname] {\par\pushQED{\qed}\normalfont\topsep6\p@\@plus6\p@\relax\trivlist\item[\hskip\labelsep\bfseries#1\@addpunct{.}]\ignorespaces}{\popQED\endtrivlist\@endpefalse} \makeatother
\newtheorem{theorem}{Theorem}[section]
\newtheorem{corollary}[theorem]{Corollary}
\newtheorem{proposition}[theorem]{Proposition}
\newtheorem{definition}[theorem]{Definition}
\theoremstyle{definition}
\newtheorem{example}[theorem]{Example}
\newtheorem{remark}[theorem]{Remark}
\numberwithin{equation}{section}
\newcommand{\sm}{\!\setminus\!}
\let\abs=\envert
\newcommand{\D}{\mathcal{D}}
\newcommand{\W}{\mathcal{W}}
\newcommand{\Y}{\mathcal{Y}}
\newcommand{\Z}{\mathbb{Z}}
\newcommand{\A}{\mathscr{A}}
\renewcommand{\a}{\alpha}
\renewcommand{\b}{\beta}
\renewcommand{\S}{\mathbb{S}}
\newcommand{\N}{\mathbb{N}}
\newcommand{\NN}{\mathcal{N}}
\newcommand{\T}{\mathbb{T}}
\newcommand{\of}[1]{\ensuremath{\left( #1 \right)}}
\newcommand{\cb}[1]{\ensuremath{ \left\{ #1 \right\} }}
\def\prehp(#1,#2){\ensuremath{  #1 \cdot #2 }}
\begin{document}
\maketitle
\thispagestyle{empty}
\begin{abstract}
Given positive integers $n,k$ with $k\leq n$, we consider the number of ways of choosing $k$ subsets of $\cb{1,\ldots,n}$ in such a way that the union of these subsets gives $\cb{1,\ldots,n}$ and they are not subsets of each other. We refer to such choices of sets as \emph{constructive $k$-covers} and provide a semi-analytic summation formula to calculate the exact number of constructive $k$-covers of $\cb{1,\ldots,n}$. Each term in the summation is the product of a new variant of Stirling numbers of the second kind, referred to as integrated Stirling numbers, and the cardinality of a certain set which we calculate by an optimization-based procedure with no-good cuts for binary variables.\\
\\[-5pt]
\textbf{Keywords and phrases:} constructive cover, minimal cover, Stirling number of the second kind, integrated Stirling number, permutation group, no-good cut, coherent system, minimal path set, minimal cut set, reliability theory\\
\\[-5pt]
\textbf{Mathematics Subject Classification (2010):} 05A15, 05A18, 05A19, 05B30, 90B25, 90C09
\end{abstract}

\section{Introduction}\label{intro}

Let $\NN$ be a set with $n\in\N\coloneqq\cb{1,2,\ldots}$ distinct objects and fix $k\in\N$. We consider the problem of choosing $k$ subsets $A_1,\ldots,A_k$ of $\NN$ in such a way that these sets cover $\NN$, that is, $\bigcup_{i=1}^k A_i = \NN$, and no two of these sets are subsets of each other, that is, $A_i \setminus A_j \neq \emptyset$ for all $i,j\in\cb{1,\ldots,k}$ with $i\neq j$. We refer to such a cover of $\NN$ as a \emph{constructive $k$-cover}. Due to the second requirement in the preceding definition, it is necessary that $k\leq n$ for a constructive $k$-cover to exist. In this paper, we address the following question: \emph{How many ways are there to choose a constructive $k$-cover of $\NN$?}

In the literature, a related problem was considered by T.~Hearne, C.~Wagner in 1973 and by R.~J.~Clarke in 1990. In both of \cite{hearnewagner} and \cite{clarke}, distinct subsets $A_1,\ldots,A_k$ of $\NN$ are said to form a \emph{minimal $k$-cover} of $\NN$ if these sets cover $\NN$, and $\NN$ cannot be a covered by a strict subcollection of these sets, that is, $\bigcup_{i\neq j}A_i\neq\NN$ for all $j\in\cb{1,\ldots,k}$. It is easy to observe that a minimal $k$-cover is necessarily a constructive $k$-cover. As a counterexample for the converse, when $n=4$ and $k=3$, note that the sets $A_1=\cb{1,2}, A_2=\cb{2,3}, A_3=\cb{1,3,4}$ form a constructive $k$-cover but not a minimal $k$-cover. In \cite{hearnewagner} and \cite{clarke}, summation-type formulae are provided for the number of minimal $k$-covers of $\NN$ (in \cite{clarke}, also for the case where the elements of $\NN$ are not necessarily distinguishable). In the present paper, the requirement that no two sets be subsets of each other makes the problem more involved and it seems that the enumeration ideas in \cite{clarke} cannot be replicated here to solve the problem. The one-to-one correspondence between minimal covers and the so-called \emph{split graphs} on $n$ vertices is studied in \cite{royle}. The more recent work \cite{collinstrenk} provides a more detailed analysis of the connection between minimal covers, split graphs and bipartite posets. In a similar line of research, the so-called \emph{$m$-balanced covers} which consist of sets with cardinality $m$ are studied in \cite{burger} and a recursive relation is provided for the number of $m$-balanced covers that include the minimum possible number of sets.

The consideration of constructive $k$-covers is motivated by \emph{coherent systems} in reliability theory. In \cite{barlow}, a coherent system is described in terms of its set $\NN$ of distinct components and the way these components are configured. The precise configuration of the components is encoded by the so-called \emph{minimal path sets} $P_1,\ldots,P_k\subseteq\NN$, which are supposed to form a constructive $k$-cover of $\NN$. For each $i\in\cb{1,\ldots,k}$, the components in $P_i$ are interpreted as components that are connected in series, and the sub-systems $P_1,\ldots,P_k$ are connected in parallel. The class of coherent systems is rich enough to cover many system configurations in reliability applications. For instance, the system with $\NN=\cb{1,2,3}$ and $P_1=\cb{1,2}, P_2=\cb{1,3}, P_3=\cb{2,3}$ corresponds to a $2$-out-of-$3$ system which functions as long as at least two of the three components function. Our main result provides a formula to calculate the number of coherent systems with $n$ distinct components and $k$ minimal path sets. Alternatively, the configuration of the components in a coherent system can also be encoded by the so-called \emph{minimal cut sets} $C_1,\ldots,C_k$, which are supposed to form a constructive $k$-cover of $\NN$ as in the case of minimal path sets. However, the interpretation of the sets is different in this case: for each $i\in\cb{1,\ldots,k}$, the components in $C_i$ are interpreted as parallel components and the sub-systems $C_1,\ldots,C_k$ are connected in series. Hence, our result also provides the number of coherent systems with $n$ components and $k$ minimal cut sets.

In the present paper, the analysis of the problem is based on a main formula in Section~\ref{problemdefn} (see Theorem~\ref{mainthm} and Corollary~\ref{compcor}) which partitions the set of constructive $k$-covers into certain Cartesian products of pairs of sets. In each pair, the cardinality of the first set is a variant of Stirling numbers of the second kind. This new variant, which we call as \emph{integrated Stirling number (ISN)}, is introduced in Section~\ref{prelim} separately, along with its basic properties. The second set in each Cartesian product gives rise to an auxilary problem which may also be of independent interest: suppose that we are about to form $k\in\N$ sets $A_1,\ldots,A_k$ which partition the world into $2^k$ (disjoint) regions. We consider all the regions except for the one denoting the intersection of all $k$ sets ($A_1\cap\ldots\cap A_k$) and the one denoting the complement of the union of all $k$ sets ($\bar{A}_1\cap\ldots\cap \bar{A}_k$). Given a number $\ell\in\cb{1,\ldots,2^k-2}$, how many ways are there to label these $2^k-2$ regions as ``non-empty" and ``empty" so that there are exactly $\ell$ regions with label ``non-empty" and each set difference $A_i\setminus A_j$, where $i,j\in\cb{1,\ldots,k}$, $i\neq j$, contains at least one region with label ``non-empty"? Note that this new problem is free of $n$, the cardinality of $\NN$, but it depends on $k$, the number of subsets in the constructive cover, as well as the auxiliary variable $\ell$ which keeps track of the regions that are labeled as ``non-empty." While calculating ISNs is an easy task, the above labeling problem is highly nontrivial and requires further analysis. As a result, when using our method, the level of difficulty in counting the number of constructive $k$-covers of $\mathcal{N}$ is largely determined by the value of $k$ rather than $n=|\mathcal{N}|$. In terms of the reliability theory application described above, this implies that our method of counting possible coherent systems is much more sensitive to the number of minimal path (or cut) sets than it is to the number of components.

We carry out a detailed analysis of the auxiliary labeling problem in Section~\ref{partitioningsec}. In particular, we exploit three types of symmetries: based on permutations (Section~\ref{permsym}), taking complements (Section~\ref{compsym}), and the so-called \emph{impact sets} (Section~\ref{impactsym}). These symmetries enable us to define an equivalence relation (Section~\ref{equiv1}) and prove finer results to calculate the answers to the auxiliary problem as well as the original problem. Finally, in Section~\ref{nogoodsec}, we describe the computational procedure based on solving feasibility problems iteratively using \emph{no-good cuts} for binary variables, namely, inequalities that ensure us, in each iteration, to find a solution that is different from the ones found in the former iterations. Some numerical results are also presented in Section~\ref{nogoodsec}. The connection between constructive covers and reliability theory, which was our initial motivation to study the subject, is explained in Section~\ref{reliability}. Section~\ref{conc} is the concluding section. The elementary proofs of the results in Section~\ref{prelim} are collected in Section~\ref{app}, the appendix.

\section{Preliminaries}\label{prelim}

We recall the definition of Stirling numbers of the second kind and introduce \emph{integrated Stirling numbers (ISN)}, a new variant of the former that will be used in the main results of Section~\ref{problemdefn}.

Let $\NN$ be a set with $n\in\N\coloneqq\cb{1,2,\ldots}$ distinct elements. Let $\ell\in\N$. A finite (ordered) sequence $\of{B_1,\ldots,B_\ell}$ of disjoint nonempty subsets of $\NN$ is said to be an \emph{ordered $\ell$-partition} of $\NN$ if
\[
B_1\cup\ldots\cup B_\ell = \NN.
\]
Let $S(\NN,\ell)$ be the set of all ordered $\ell$-partitions of $\NN$.

For $n,\ell$ as above, the corresponding \emph{Stirling number (of the second kind)} is defined as
\begin{equation}\label{StirlingDefn}
s(n,\ell)\coloneqq \frac{1}{\ell!}\sum_{j=0}^\ell (-1)^{\ell-j}\binom{\ell}{j}j^n.
\end{equation}
Note that $\ell!s(n,\ell)$ gives the number of ordered $\ell$-partitions of $\NN$, that is,
\[
|S(\NN,\ell)|=\ell!s(n,\ell).
\]
It can be checked by induction that
\begin{equation}\label{Stirlingzero}
s(n,\ell)=0
\end{equation}
for every $n<\ell$. Moreover, it is well-known that these numbers satisfy the recurrence relation
\[
s(n+1,\ell)=\ell s(n,\ell)+s(n,\ell-1),\quad n\in\N\sm\cb{1},\ \ell\in\cb{2,\ldots,n}
\]
with the boundary conditions
\[
s(n,n)=1,\quad s(n,1)=1,\ n\in\N.
\]

For each $n\in\N$ and $\ell\in\N$, we define the ISN as
\[
\tilde{s}(n,\ell)\coloneqq \frac{1}{\ell!}\sum_{j=0}^\ell (-1)^{\ell-j}\binom{\ell}{j}(j+1)^n.
\]
The next proposition will be used to interpret $\tilde{s}(n,\ell)$ in terms of the number of ordered $\ell$-partitions of subsets of $\NN$. Its proof is given in Section \ref{app}, the appendix.

\begin{proposition}\label{workhorse}
	Let $n,\ell\in\N$. We have
	\begin{equation}\label{stilderes}
	\tilde{s}(n,\ell)=\sum_{i=1}^n\binom{n}{i}s(i,\ell).
	\end{equation}
	In particular, the following are valid.
	\begin{enumerate}[(i)]
		\item If $n<\ell$, then $\tilde{s}(n,\ell)=0$.
		\item If $n\geq \ell$, then
		\begin{equation}\label{stildeinterpret}
		\tilde{s}(n,\ell)=\sum_{i=\ell}^n\binom{n}{i}s(i,\ell)=\sum_{i=0}^{n-\ell}\binom{n}{i}s(n-i,\ell).
		\end{equation}
		\item $\tilde{s}(n,n)=1$.
		\item $\tilde{s}(n,1)=2^n-1$.
		\end{enumerate}
	\end{proposition}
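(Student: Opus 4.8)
The plan is to establish the master identity \eqref{stilderes} directly from the two defining sums, and then to read off items (i)--(iv) as routine consequences of \eqref{stilderes} together with the elementary facts \eqref{Stirlingzero}, $s(n,n)=1$ and $s(n,1)=1$ recorded above.

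For \eqref{stilderes}, the first step is to expand the factor $(j+1)^n$ inside the definition of $\tilde{s}(n,\ell)$ by the binomial theorem, writing $(j+1)^n=\sum_{i=0}^n\binom{n}{i}j^i$. Substituting this into
\[
\tilde{s}(n,\ell)=\frac{1}{\ell!}\sum_{j=0}^\ell(-1)^{\ell-j}\binom{\ell}{j}(j+1)^n
\]
and interchanging the two finite sums yields
\[
\tilde{s}(n,\ell)=\sum_{i=0}^n\binom{n}{i}\left(\frac{1}{\ell!}\sum_{j=0}^\ell(-1)^{\ell-j}\binom{\ell}{j}j^i\right).
\]
For each $i\geq 1$ the inner parenthesis is exactly $s(i,\ell)$ by \eqref{StirlingDefn}. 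The only point requiring care is the $i=0$ term, whose inner sum equals $\frac{1}{\ell!}\sum_{j=0}^\ell(-1)^{\ell-j}\binom{\ell}{j}=\frac{(-1+1)^\ell}{\ell!}$, which vanishes because $\ell\geq 1$ (recall $\ell\in\N$). Discarding this vanishing term leaves $\tilde{s}(n,\ell)=\sum_{i=1}^n\binom{n}{i}s(i,\ell)$, which is \eqref{stilderes}.

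Items (i)--(iv) then follow quickly. For (i), when $n<\ell$ every index $i\in\cb{1,\ldots,n}$ satisfies $i\leq n<\ell$, so $s(i,\ell)=0$ by \eqref{Stirlingzero} and the whole sum vanishes. For (ii), when $n\geq\ell$ the same reasoning kills the terms with $i<\ell$, leaving $\tilde{s}(n,\ell)=\sum_{i=\ell}^n\binom{n}{i}s(i,\ell)$; the reindexing $i\mapsto n-i$ together with the symmetry $\binom{n}{i}=\binom{n}{n-i}$ converts this into $\sum_{i=0}^{n-\ell}\binom{n}{i}s(n-i,\ell)$, giving the second equality. For (iii), specializing (ii) to $\ell=n$ leaves only the single term $\binom{n}{n}s(n,n)=1$. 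For (iv), substituting $s(i,1)=1$ for every $i\geq 1$ into \eqref{stilderes} gives $\tilde{s}(n,1)=\sum_{i=1}^n\binom{n}{i}=2^n-1$.

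I do not expect a serious obstacle here: the entire argument reduces to a single binomial expansion followed by an interchange of summation, and the only subtlety is remembering to check that the spurious $i=0$ contribution vanishes, which is precisely where the hypothesis $\ell\geq 1$ (rather than $\ell\geq 0$) is used.
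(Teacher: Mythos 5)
Your proof is correct and takes essentially the same route as the paper's: expand $(j+1)^n$ by the binomial theorem, interchange the two finite sums, observe that the $i=0$ contribution vanishes because $\sum_{j=0}^\ell(-1)^{\ell-j}\binom{\ell}{j}=(1-1)^\ell=0$ for $\ell\geq 1$, and then read off (i)--(iv) from \eqref{stilderes} using \eqref{Stirlingzero} and the boundary values $s(n,n)=s(n,1)=1$. The only cosmetic difference is that in (iii) you use the first sum in \eqref{stildeinterpret} (term $\binom{n}{n}s(n,n)$) where the paper uses the reindexed one (term $\binom{n}{0}s(n,n)$), which is immaterial.
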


\begin{remark}\label{interpretation}
	Let $1\leq\ell\leq n$. The relation \eqref{stildeinterpret} in Proposition~\ref{workhorse} provides the following characterization of ISNs. For each $i\in\cb{\ell,\ldots,n}$, as mentioned above, $\ell! s(i,\ell)$ is the number of ordered $\ell$-partitions of a set of $i$ distinguishable objects; hence, $\binom ni \ell! s(i,\ell)$ gives the total number of ordered $\ell$-partitions of all subsets of $\mathcal{N}$ with size $i$. Let $\tilde{S}(\NN,\ell)$ be the set of all ordered $\ell$-partitions of all subsets of $\NN$ (with at least size $\ell$), that is,
	\begin{equation}\label{defnstilde}
	\tilde{S}(\NN,\ell)\coloneqq \cb{S(\mathcal{I},\ell)\mid \mathcal{I}\subseteq\NN,|\mathcal{I}|\geq \ell}.
	\end{equation}
	 Therefore,
	 \begin{equation}\label{stildecount}
	 |\tilde{S}(\NN,\ell)|=\ell!\tilde{s}(n,\ell).
	 \end{equation}
\end{remark}

\begin{proposition}\label{stilderec}
	ISNs satisfy the recurrence relation
	\[
	\tilde{s}(n+1,\ell)=(\ell+1)\tilde{s}(n,\ell)+\tilde{s}(n,\ell-1),\quad n\in\N\sm\cb{1}, \quad \ell\in\cb{2,\ldots,n}.
	\]
	with the boundary conditions
	\[
	\tilde{s}(n,n)=1,\ \tilde{s}(n,1)=2^n-1, \ n\in\N.
	\]
	\end{proposition}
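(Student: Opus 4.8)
The two boundary conditions require no new work: $\tilde{s}(n,n)=1$ and $\tilde{s}(n,1)=2^n-1$ are exactly parts (iii) and (iv) of Proposition~\ref{workhorse}, so the plan is to spend all effort on the recurrence. I would prove it by a direct manipulation of the defining inclusion--exclusion sum, imitating the classical derivation of the recurrence for ordinary Stirling numbers. Concretely, I would start from $\ell!\,\tilde{s}(n+1,\ell)=\sum_{j=0}^\ell (-1)^{\ell-j}\binom{\ell}{j}(j+1)^{n+1}$ and factor one copy of the linear factor $(j+1)$ out of the $(n+1)$-st power.

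The key algebraic trick is to split that linear factor as $j+1=(\ell+1)-(\ell-j)$, which is engineered precisely so that the two resulting pieces collapse onto the two target terms. The piece coming from $(\ell+1)$ is immediately $(\ell+1)\sum_{j=0}^\ell(-1)^{\ell-j}\binom{\ell}{j}(j+1)^{n}=(\ell+1)\,\ell!\,\tilde{s}(n,\ell)$. For the piece coming from $-(\ell-j)$, I would invoke the absorption identity $(\ell-j)\binom{\ell}{j}=\ell\binom{\ell-1}{j}$: the $j=\ell$ summand drops out, the sign rewrites as $(-1)^{\ell-j}=-(-1)^{\ell-1-j}$, and after re-indexing the sum becomes exactly $-\ell\,(\ell-1)!\,\tilde{s}(n,\ell-1)=-\ell!\,\tilde{s}(n,\ell-1)$. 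Recombining the two pieces and dividing by $\ell!$ then yields $\tilde{s}(n+1,\ell)=(\ell+1)\tilde{s}(n,\ell)+\tilde{s}(n,\ell-1)$, as required. I expect the only delicate points to be bookkeeping: checking that the stated range $\ell\in\cb{2,\ldots,n}$ keeps every binomial coefficient (in particular $\binom{\ell-1}{j}$ and the index $\ell-1$ in $\tilde{s}(n,\ell-1)$) meaningful, and confirming the vanishing of the boundary summands so that the two sums genuinely reduce to $\tilde{s}(n,\ell)$ and $\tilde{s}(n,\ell-1)$. There is no conceptual obstacle beyond guessing the right split of the linear factor, which is the direct analogue of writing $j=\ell-(\ell-j)$ in the classical argument.

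As an alternative that I find more illuminating, one can argue combinatorially through the interpretation in Remark~\ref{interpretation}. Set $T(n,\ell):=\ell!\,\tilde{s}(n,\ell)=|\tilde{S}(\NN,\ell)|$, the number of ordered $\ell$-partitions of subsets of an $n$-set; equivalently, $T(n,\ell)$ counts maps from the $n$-set into $\cb{1,\ldots,\ell}$ together with a single ``discard'' symbol in which each of the labels $1,\ldots,\ell$ is used at least once. Multiplying the proposed recurrence by $\ell!$ turns it into the clean statement $T(n+1,\ell)=(\ell+1)T(n,\ell)+\ell\,T(n,\ell-1)$, which I would prove by conditioning on the image of the last of the $n+1$ elements. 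If the first $n$ elements already use every mandatory label, the last element is free to take any of the $\ell+1$ values, contributing $(\ell+1)T(n,\ell)$; otherwise exactly one mandatory label is missing and must be supplied by the last element, contributing $\ell\,T(n,\ell-1)$ after choosing which of the $\ell$ labels is missing and re-labelling. The one point to handle with care here is the observation that a single element can repair at most one missing label, which is exactly what forces the second case to be $\ell\,T(n,\ell-1)$ rather than anything larger and rules out configurations with two or more missing labels.
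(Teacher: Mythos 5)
Your proposal is correct, and its two halves relate to the paper's proof in different ways. Your first, algebraic argument is essentially the paper's own computation run in reverse: the paper starts from $(\ell+1)\tilde{s}(n,\ell)+\tilde{s}(n,\ell-1)$, splits off the $j=\ell$ term, and merges the two sums via $\binom{\ell-1}{j}=\tfrac{\ell-j}{\ell}\binom{\ell}{j}$ and $\tfrac{\ell+1}{\ell}-\tfrac{\ell-j}{\ell}=\tfrac{j+1}{\ell}$, which are exactly your absorption identity $(\ell-j)\binom{\ell}{j}=\ell\binom{\ell-1}{j}$ and your split $j+1=(\ell+1)-(\ell-j)$ read from right to left; both treatments dispatch the boundary conditions identically by citing Proposition~\ref{workhorse}(iii)--(iv). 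One caution on the write-up of this half: the quantity you evaluate as $-\ell!\,\tilde{s}(n,\ell-1)$ is the sum $\sum_{j=0}^{\ell}(-1)^{\ell-j}(\ell-j)\binom{\ell}{j}(j+1)^n$ itself, which enters $\ell!\,\tilde{s}(n+1,\ell)$ with a leading minus sign from the split, so its net contribution is $+\ell!\,\tilde{s}(n,\ell-1)$; your final recombination carries the correct sign, but you should make this double negative explicit, since as phrased the ``piece'' sounds negative while your conclusion needs it positive. Your second, combinatorial argument is genuinely different from anything in the paper: the paper proves the recurrence by pure manipulation of the defining sum and never invokes the interpretation of Remark~\ref{interpretation} here. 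Setting $T(n,\ell)=\ell!\,\tilde{s}(n,\ell)=|\tilde{S}(\NN,\ell)|$, conditioning on the image of the $(n+1)$-st element in a map to $\cb{1,\ldots,\ell}\cup\cb{\ast}$ hitting every label, and observing that a single element can repair at most one missing label, correctly yields $T(n+1,\ell)=(\ell+1)T(n,\ell)+\ell\,T(n,\ell-1)$, which is the stated recurrence after dividing by $\ell!$. What this alternative buys is an explanation of where the coefficients come from, in exact analogy with the classical combinatorial proof of $s(n+1,\ell)=\ell s(n,\ell)+s(n,\ell-1)$; what the paper's algebraic route buys is brevity and independence from any counting model.
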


Using the recurrence relation in \eqref{stilderec}, we calculate $\tilde{s}(n,\ell)$ for $n\in\cb{1,\ldots,10}$ and $\ell\in\cb{1,\ldots,n}$ as shown in Table~\ref{table2}. For completeness, we also provide the values of $s(n,\ell)$ for the same $(n,\ell)$ pairs in Table~\ref{table1}.

\begin{table}[h]
	\centering
	\begin{tabular}[htbp]{|c|*{10}{p{1cm}|}}\hline 
		\diagbox{$n$}{$\ell$} &$1$  & $ 2$ &  $ 3$ & $ 4$ & $5$& $6$ & $ 7$ & $8$ & $9$ & $10$\\ 	
		\hline 
		$ 1$&  $1$   \\ \cline{1-3} 
		$ 2$&  $1$  & $1$ \\ \cline{1-4}
		$3$&  $ 1$ &$3$ &$1$ \\ \cline{1-5}
		$4$& $1$ &$7$	 & $6$& $1$  \\\cline{1-6}
		$5$& $1$  &$15$&$25$ &$10$ &$1$  \\\cline{1-7}
		$6$& $1$  &$31$&$90$ &$65$ &$15$ & $1$ \\\cline{1-8}
		$7$& $1$  &$63$&$301$ &$350$ &$140$ & $21$ &$1$ \\ \cline{1-9}
		$8$& $1$  &$127$&$966$ &$1701$ &$1050$ & $266$ & $28$ &$1$ \\ \cline{1-10}
		$9$& $1$  &$255$&$3025$ &$7770$ &$6951$ & $2646$ & $462$ &$36$& $1$ \\ \cline{1-11}
		$10$& $1$ &$511$&$9330$ &$34105$ &$42525$ & $22827$ & $5880$ &$750$ & $45$& $1$ \\ \hline 
	\end{tabular}
	\caption{$s(n,\ell)$ values for $1\leq \ell\leq n\leq 10$}\label{table1}
		\centering
		\vspace{0.3cm}
	\begin{tabular}{|c|*{10}{p{1cm}|}}\hline 
		\diagbox{$n$}{$\ell$} &$1$  & $ 2$ &  $ 3$ & $ 4$ & $5$& $6$ & $ 7$ & $8$ & $9$ & $10$\\ 	
		\hline 
		$ 1$&  $1$   \\ \cline{1-3} 
		$ 2$&  $3$  & $1$ \\ \cline{1-4}
		$3$&  $ 7$ &$6$ &$1$ \\ \cline{1-5}
		$4$& $15$ &$25$	 & $10$& $1$  \\\cline{1-6}
		$5$& $31$  &$90$&$65$ &$15$ &$1$  \\\cline{1-7}
		$6$& $63$  &$301$&$350$ &$140$ &$21$ & $1$ \\\cline{1-8}
		$7$& $127$  &$966$&$1701$ &$1050$ &$266$ & $28$ &$1$ \\ \cline{1-9}
		$8$& $255$  &$3025$&$7770$ &$6951$ &$2646$ & $462$ & $36$ &$1$ \\ \cline{1-10}
		$9$& $511$  &$9330$&$34105$ &$42525$ &$22827$ & $5880$ & $750$ &$45$& $1$ \\ \cline{1-11}
		$10$& $1023$ &$28501$&$145750$ &$246730$ &$179487$ & $63987$ & $11880$ &$1155$ & $55$& $1$ \\ \hline 
	\end{tabular}
	\caption{$\tilde{s}(n,\ell)$ values for $1\leq \ell\leq n\leq 10$}\label{table2}
\end{table}

\section{Constructive covers and sets of labelings}\label{problemdefn}

As in the previous section, we consider a set $\NN$ of $n\in\N$ distinct elements. Let $k\in\cb{1,\ldots,n}$.

\begin{definition}\label{kcover}
	A finite sequence $\A=\of{A_1,\ldots,A_k}$ of distinct subsets of $\NN$ is said to be a constructive ordered $k$-cover of $\NN$ if it satisfies the following conditions.
\begin{enumerate}[(i)]
	\item The sets in $\A$ cover $\NN$, that is, $\bigcup_{i=1}^k A_i= \NN$.
	\item Two distinct sets in $\A$ are not subsets of each other, that is, $A_i\sm A_j\neq\emptyset$ for every $i,j\in\cb{1,\ldots,k}$ with $i\neq j$.
\end{enumerate}
\end{definition}

Let $C(\NN,k)$ be the set of all distinct constructive $k$-covers of $\NN$. Our aim is to provide a characterization of the set $C(\NN,k)$ that also helps computing its cardinality $ |C(\NN,k)|$.

\begin{remark}\label{unordered}
	Note that one can also define a constructive \emph{unordered} $k$-cover of $\NN$ as a collection $\cb{A_1,\ldots,A_k}$ of distinct subsets of $\NN$ satisfying the two conditions above. Clearly, the number of all distinct constructive unordered $k$-covers of $\NN$ is $|C(\NN,k)|/k!$.
	\end{remark}

Let $\A=\of{A_1,\ldots,A_k}$ be a constructive ordered $k$-cover of $\NN$. Consider a binary index vector $t=(t_1,\ldots,t_k)\in\cb{e,c}^k$ ($e$ for ``excluded," $c$ for ``contained") and define the set
\[
B_t(\A)\coloneqq \of{\bigcap_{i\colon t_i=c}A_i} \cap\of{\bigcap_{i\colon t_i=e}\bar{A}_i},
\]
where $\bar{A}$ denotes the complement of a subset $A$ of $\NN$. Hence, $\A$ gives rise to $2^k$ disjoint (possibly empty) sets $B_t(\A)$, $t\in\cb{e,c}^k$.

\begin{example}
	To be more specific, let $k=5$ and $t=(e,e,c,e,c)$. Then,
	\[
	B_t(\A)=\bar{A}_1\cap \bar{A}_2\cap A_3\cap \bar{A}_4\cap A_5.
	\]
\end{example}

Two cases need special attention in the above construction. The special vector $\mathbf{e}\coloneqq(e,\ldots,e)$ corresponds to
\[
B_{\mathbf{e}}(\A)=\bar{A}_1\cap\ldots \cap \bar{A}_k,
\]
which must be equal to the empty set since $\A$ is a constructive ordered $k$-cover. On the other hand, the special vector $\mathbf{c}\coloneqq(c,\ldots,c)$ corresponds to
\[
B_{\mathbf{c}}(\A)=A_1\cap\ldots \cap A_k
\]
and the definition of constructive ordered $k$-cover does not impose any non-emptiness condition on this set. Next, we introduce the index sets
\begin{equation}\label{tk}
\T(k)\coloneqq \cb{e,c}^k \sm \cb{\mathbf{e},\mathbf{c}},\quad \T^\ast(k) \coloneqq \cb{e,c}^k\sm \cb{\mathbf{e}},
\end{equation}
and rewrite the conditions (i) and (ii) in Definition~\ref{kcover} as follows.
\begin{enumerate}[(i)]
	\item $\bigcup_{i=1}^k A_i=\bigcup_{t\in\T^\ast(k)}B_t(\A)=\NN$.
	\item For every $i,j\in\cb{1,\ldots,k}$ with $i\neq j$,
	\[
	A_i\sm A_j = \bigcup_{t\in\T(k)\colon t_i=c, t_j=e}B_t(\A)\neq \emptyset.
	\]
\end{enumerate}
Note that condition (ii) depends on the sets $B_t(\A), t\in\T(k)$, only through the \emph{non-emptiness} of certain unions of these sets. Indeed, for each $t\in\T(k)$, let us introduce the binary number
\[
x_t(\A)\coloneqq \begin{cases}1&\text{ if }B_t(\A)\neq\emptyset,\\ 0&\text{ if }B_t(\A)=\emptyset.\end{cases}
\]
Hence, condition (ii) is equivalent to having 
\[
\sum_{t\in\T(k)\colon t_i=c, t_j=e}x_t(\A)\geq 1
\]
for every $i,j\in\cb{1,\ldots,k}$ with $i\neq j$. Let us define
\begin{equation}\label{defng}
G(k)\coloneqq\cb{(x_t)_{t\in\T(k)}\mid \sum_{t\in\T(k)\colon t_i=c, t_j=e}x_t\geq 1\ \forall i\neq j,\ x_t\in\cb{0,1}\ \forall t\in\T(k)}.
\end{equation}
Since $\A$ is a constructive ordered $k$-cover of $\NN$, we necessarily have $(x_t(\A))_{t\in\T(k)}\in G(k)$. Let us also define for each $\ell\in\cb{1,\ldots, 2^{k}-2}$ the set
\begin{equation}\label{DefnOfF}
F(k,\ell)\coloneqq\cb{(x_t)_{t\in\T(k)}\in G(k)\mid\sum_{t\in\T(k)}x_t=\ell},
\end{equation}
which is the set of all labelings in $G(k)$ where exactly $\ell$ sets are labeled as ``non-empty." In short, we refer to $F(k,\ell)$ as the set of all $(k,\ell)$-labelings. We have
\begin{equation}\label{disjointunion}
G(k)=\bigcup_{\ell=1}^{2^k-2}F(k,\ell),
\end{equation}
which is a disjoint union. Hence, we obtain
\[
|G(k)|=\sum_{\ell=1}^{2^k-2}|F(k,\ell)|.
\]

In the following example, we illustrate the notation and the structure of the sets defined above.
\begin{example}
Let us consider the case $k=3$. We have
	\begin{equation}\label{tkorder}
	\T(3)=\cb{(c,e,e),(e,c,e),(e,e,c),(c,c,e),(c,e,c),(e,c,c)}.
	\end{equation}
	Then, we can write the corresponding set $G(3)$ as
	\begin{align*}
	G(3)=\Big\{(x_t)_{t\in\T(3)}\mid \ & x_{(c,e,e)}+x_{(c,e,c)}\geq 1,\ x_{(e,c,e)}+x_{(e,c,c)}\geq 1,\ x_{(c,e,e)}+x_{(c,c,e)}\geq 1,\\
	&x_{(e,e,c)}+x_{(e,c,c)}\geq 1,\ x_{(e,c,e)}+x_{(c,c,e)}\geq 1,\ x_{(e,e,c)}+x_{(c,e,c)}\geq 1,\\
	& x_t\in\cb{0,1}\ \forall t\in\T(3)\Big\}.
	\end{align*}
	Hence, for each $\ell\in\cb{1,\ldots,6}$,
	\begin{align*}
	F(3,\ell)=\Big\{(x_t)_{t\in\T(3)}\mid \ & x_{(c,e,e)}+x_{(c,e,c)}\geq 1,\ x_{(e,c,e)}+x_{(e,c,c)}\geq 1,\ x_{(c,e,e)}+x_{(c,c,e)}\geq 1,\\
	&x_{(e,e,c)}+x_{(e,c,c)}\geq 1,\ x_{(e,c,e)}+x_{(c,c,e)}\geq 1,\ x_{(e,e,c)}+x_{(c,e,c)}\geq 1,\\
	&x_{(e,e,c)}+x_{(e,c,e)}+x_{(e,c,c)}+x_{(c,e,e)}+x_{(c,e,c)}+x_{(c,c,e)}=\ell
	\Big\}.
	\end{align*}
	We observe that $F(3,1)=F(3,2)=\emptyset$. Further, we can explicitly express $F(3,3)$ as
	\begin{align*}
	&F(3,3)=\cb{(c,c,c,e,e,e),(e,e,e,c,c,c)},\\
	&F(3,4)=\{(c,e,e,c,c,c),(e,c,e,c,c,c),(e,e,c,c,c,c),(c,c,c,c,e,e),(c,c,c,e,c,e),(c,c,c,e,e,c),\\
		&\quad\quad\quad \quad \quad  (c,c,e,e,c,c),(c,e,c,c,e,c),(e,c,c,c,c,e)\},\\
	&F(3,5)=\cb{(c,c,c,c,c,e),(c,c,c,c,e,c),(c,c,c,e,c,c),(c,c,e,c,c,c),(c,c,e,c,c,c),(e,c,c,c,c,c)},\\
	&F(3,6)=\cb{(c,c,c,c,c,c)},
	\end{align*}
	following the order in \eqref{tkorder}. Hence,
	\[
	G(3)=F(3,3)\cup F(3,4)\cup F(3,5)\cup F(3,6).
	\]
	\end{example}

Using \eqref{disjointunion}, we may write
\[
C(\NN,k)=\bigcup_{\ell=1}^{2^{k}-2}\cb{\A\in C(\NN,k)\mid (x_t(\A))_{t\in\T(k)}\in F(k,\ell)}
\]
as a disjoint union so that
\begin{equation}\label{subthm}
|C(\NN,k)|=\sum_{\ell=1}^{2^k-2}|\cb{\A\in C(\NN,k)\mid (x_t(\A))_{t\in\T(k)}\in F(k,\ell)}|.
\end{equation}

The next theorem is the main result of the paper. Up to isomorphisms, it characterizes $C(\NN,k)$ as a disjoint union of Cartesian products of basic sets. For two sets $E_1,E_2$, we write $E_1\cong E_2$ if there is a bijection $f\colon E_1\to E_2$.

\begin{theorem}\label{mainthm}
	For each $\ell\in\cb{1,\ldots,2^k-2}$, it holds
		\begin{equation}\label{congell}
	\cb{\A\in C(\NN,k)\mid (x_t(\A))_{t\in\T(k)}\in F(k,\ell)}\cong\tilde{S}(n,\ell) \times F(k,\ell).
	\end{equation}
	In particular,
	\begin{equation}\label{cong}
	C(\NN,k)\cong\bigcup_{\ell=1}^{2^k-2}\tilde{S}(n,\ell) \times F(k,\ell).
	\end{equation}
	\end{theorem}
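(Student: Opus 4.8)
The plan is to construct, for each fixed $\ell\in\cb{1,\ldots,2^k-2}$, a pair of mutually inverse maps between the set on the left of \eqref{congell} and $\tilde{S}(n,\ell)\times F(k,\ell)$; the identity \eqref{cong} then follows at once by taking the disjoint union over $\ell$ and invoking \eqref{subthm}. The organizing idea is that a constructive ordered $k$-cover $\A$ is entirely encoded by its atom decomposition into the sets $B_t(\A)$, $t\in\cb{e,c}^k$, which form a partition of $\NN$; the labeling $x(\A)=(x_t(\A))_{t\in\T(k)}$ records \emph{which} of the $\T(k)$-atoms are nonempty, while the \emph{contents} of those nonempty atoms, together with the contents of the single free atom $B_{\mathbf c}(\A)$, record \emph{how} the elements of $\NN$ are distributed.

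First I would isolate the elementary reconstruction fact underlying everything. Suppose the sets $B_t$, $t\in\cb{e,c}^k$, are pairwise disjoint with $\bigcup_t B_t=\NN$, and set $A_i\coloneqq\bigcup_{t\colon t_i=c}B_t$. Then $B_t(\A)=B_t$ for every $t$, where $\A=(A_1,\ldots,A_k)$. This is seen by tracking a single element $\o\in\NN$: it lies in a unique $B_s$, so $\o\in A_i$ iff $s_i=c$, whence $\o\in B_t(\A)$ iff $t=s$. Consequently $\A\mapsto(B_t(\A))_t$ is a bijection between $k$-tuples of subsets of $\NN$ and families of pairwise disjoint sets indexed by $\cb{e,c}^k$ with union $\NN$, under which, as already worked out before \eqref{defng}, Definition~\ref{kcover} translates exactly into the two requirements $B_{\mathbf e}(\A)=\emptyset$ and $(x_t(\A))_{t\in\T(k)}\in G(k)$.

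Next I would fix once and for all an enumeration of $\T(k)$ and, for $x\in F(k,\ell)$, write $t^{(1)}(x),\ldots,t^{(\ell)}(x)$ for the $\ell$ indices with $x_t=1$, listed in that order. The forward map sends $\A$, with $x\coloneqq x(\A)\in F(k,\ell)$, to the pair $\big((B_{t^{(1)}(x)}(\A),\ldots,B_{t^{(\ell)}(x)}(\A)),\,x\big)$: its first coordinate is an ordered tuple of $\ell$ pairwise disjoint nonempty subsets of $\NN$, hence an element of $\tilde{S}(n,\ell)$, and its second coordinate lies in $F(k,\ell)$. The backward map sends $\big((C_1,\ldots,C_\ell),x\big)$ to the system with $A_i=\bigcup_{t\colon t_i=c}B_t$ assembled from the partition $B_{t^{(m)}(x)}\coloneqq C_m$ for $m\in\cb{1,\ldots,\ell}$, $B_{\mathbf c}\coloneqq\NN\sm\bigcup_{m=1}^\ell C_m$, and $B_t\coloneqq\emptyset$ otherwise (in particular $B_{\mathbf e}=\emptyset$). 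Here the role of $\tilde{S}(n,\ell)$ becomes transparent: letting the union $\bigcup_m C_m$ range over all subsets of $\NN$ of size at least $\ell$ is precisely what frees $B_{\mathbf c}$ to absorb the remaining elements, and summing over the size of this union is exactly the ``integration'' recorded in \eqref{stildeinterpret}. By the reconstruction fact the output $\A$ satisfies $B_t(\A)=B_t$, so $B_{\mathbf e}(\A)=\emptyset$ delivers condition (i) and $x\in F(k,\ell)\subseteq G(k)$ delivers condition (ii) (distinctness of the $A_i$ being automatic, since (ii) forces $A_i\neq A_j$); moreover $x(\A)=x$, so the two maps are mutually inverse by inspection.

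The one point demanding care is the bookkeeping among the three kinds of atoms --- the $\ell$ forced-nonempty $\T(k)$-atoms, the forced-empty atoms ($\mathbf e$ and the $\T(k)$-indices with $x_t=0$), and the single free atom $\mathbf c$ --- and in particular checking that the backward map lands in the correct slice, i.e.\ that $x(\A)$ equals the prescribed $x$ rather than some other labeling of the same $\ell$. This reduces, again via the reconstruction fact, to the observation that $x_t(\A)=1$ iff $B_t\neq\emptyset$, which holds by construction because each $C_m$ is nonempty while every other $\T(k)$-atom was set to $\emptyset$. Once \eqref{congell} is established for each $\ell$, the disjointness of the union in \eqref{disjointunion} transfers to the decomposition \eqref{subthm}, which yields \eqref{cong}.
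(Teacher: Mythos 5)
Your proposal is correct and follows essentially the same route as the paper's proof: both encode a cover $\A$ by the pair consisting of the ordered tuple of its nonempty $\T(k)$-atoms (listed in a fixed deterministic order of $\T(k)$; the paper uses the lexicographic one) together with the labeling $(x_t(\A))_{t\in\T(k)}$, and both verify bijectivity by reconstructing $A_i=\bigl(\bigcup_{t\colon t_i=c}B_t\bigr)\cup B_{\mathbf{c}}$ from the atoms. The only cosmetic difference is that you isolate the atom-reconstruction step as an explicit lemma and phrase bijectivity via mutually inverse maps, whereas the paper argues injectivity and surjectivity separately.
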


\begin{proof}
	Let $\ell\in\cb{1,\ldots,2^k-2}$. Let $\A=(A_1,\ldots,A_k)\in C(\NN,k)$ with $(x_t(\A))_{t\in\T(k)}\in F(k,\ell)$. In other words, $\A$ is a constructive ordered $k$-cover of $\NN$ for which exactly $\ell$ of the sets $B_t(\A)$, $t\in\T(k)$, are nonempty. Denoting by $\prec$ the strict lexicographical ordering on $\T(k)$, let us order these $\ell$ indices as $t^1(\A)\prec\ldots\prec t^\ell(\A)$. Then, $\mathcal{B}(\A)\coloneqq (B_{t^1(\A)}(\A),\ldots,B_{t^\ell(\A)}(\A))$ is an ordered $\ell$-partition of $\mathcal{N}\sm B_{\mathbf{c}}(\A)$. Hence, $\mathcal{B}(\A)\in S(\mathcal{N}\sm B_{\mathbf{c}}(\A),\ell)\subseteq \tilde{S}(\NN,\ell)$ by \eqref{defnstilde}.
	
	The above construction establishes the mapping
	\begin{equation}\label{mapping}
	\A\mapsto (\mathcal{B}(\A),(x_t(\A))_{t\in\T(k)})
	\end{equation}
	from $\cb{\A\in C(\NN,k)\mid (x_t(\A))_{t\in\T(k)}\in F(k,\ell)}$ to $\tilde{S}(\NN,\ell)\times F(k,\ell)$. To check that this mapping is injective, let $\A^\prime=(A^\prime_1,\ldots,A^\prime_k)\in C(\NN,k)$ be another constructive ordered $k$-cover such that $(x_t(\A^\prime))_{t\in\T(k)}\in F(k,\ell)$. Suppose that
	\[
	(x_t(\A))_{t\in\T(k)}=(x_t(\A^\prime))_{t\in\T(k)},\quad \mathcal{B}(\A)=\mathcal{B}(\A^\prime).
	\]
	The first supposition guarantees that $\A$ and $\A^\prime$ agree on the nonemptiness of their corresponding sets $B_t(\A), B_t(\A^\prime)$ for each $t\in\T(k)$. In other words,
	\[
	B_t(\A)=\emptyset\quad\Leftrightarrow\quad B_t(\A^\prime)=\emptyset
	\]
	for each $t\in\T(k)$. Moreover, from the definition of lexicographical ordering, it follows that
	\[
	t^1\coloneqq t^1(\A)=t^1(\A^\prime),\ldots, t^\ell\coloneqq t^\ell(\A)=t^\ell(\A^\prime).
	\]
	Then, by the second supposition, we have
	\[
	B_{t^1}(\A)=B_{t^1}(\A^\prime),\ldots, B_{t^\ell}(\A)=B_{t^\ell}(\A^\prime).
	\]
	Hence,
	\[
	B_t(\A)=B_t(\A^\prime)
	\]
	for every $t\in\T(k)$. Since $\bigcup_{i=1}^k A_i = \bigcup_{i=1}^k A^\prime_i=\NN$, we also have
	\[
	B_{\mathbf{c}}(\A)=B_{\mathbf{c}}(\A^\prime).
	\]
	Finally, we have
	\[
	A_i= \of{\bigcup_{t\in\T(k)\colon t_i=c}B_t(\A)}\cup B_{\mathbf{c}}(\A)= \of{\bigcup_{t\in\T(k)\colon t_i=c}B_t(\A^\prime)}\cup B_{\mathbf{c}}(\A^\prime)=A^\prime_i
	\] 
	for every $i\in\cb{1,\ldots,k}$ so that $\A=\A^\prime$. This finishes the proof of injectivity. 
	
	Next, we show that the mapping in \eqref{mapping} is surjective. Let $\mathcal{B}=(\bar{B}_1,\ldots,\bar{B}_\ell)\in \tilde{S}(\NN,\ell)$ and $(x_t)_{t\in\T(k)}\in F(k,\ell)$. Hence, by \eqref{defnstilde} in Remark~\ref{interpretation}, there exists $\mathcal{I}\subseteq\NN$ with $|\mathcal{I}|\geq \ell$ such that $\mathcal{B}\in S(\mathcal{I},\ell)$. Let us set
	\[
	B_{\mathbf{c}}\coloneqq \NN\sm\of{\bar{B}_1\cup\ldots\cup \bar{B}_{\ell}}=\NN\sm\mathcal{I}.
	\]
	On the other hand, consider the set of all $t\in\T(k)$ for which $x_t =1$. Since $(x_t)_{t\in\T(k)}\in F(k,\ell)$, there are $\ell$ such indices in $\T(k)$. As before, let us order them as $t^1\prec\ldots\prec t^\ell$ using the lexicographical ordering and set
	\[
	B_{t^1}\coloneqq \bar{B}_1,\ldots, B_{t^\ell}\coloneqq \bar{B}_\ell
	\]
	and
	\[
	B_t\coloneqq \emptyset
	\]
	for every $t\in\T(k)\sm\cb{t^1,\ldots,t^\ell}$. Then, let
	\[
	A_i\coloneqq \of{\bigcup_{t\in\T(k)\colon t_i=c}B_t}\cup B_{\mathbf{c}}
	\]
	for each $i\in\cb{1,\ldots,k}$ and $\A\coloneqq (A_1,\ldots,A_k)$. It is clear that $\bigcup_{i=1}^k A_i=\NN$. Moreover, the assumption that $(x_t)_{t\in\T(k)}\in F(k,\ell)\subseteq G(k)$ guarantees that $A_i\sm A_j\neq\emptyset$ for every $i,j\in\cb{1,\ldots,k}$ with $i\neq j$. Finally, by construction of the mapping in \eqref{mapping}, we conclude that $\mathcal{B}(\A)=\mathcal{B}$ and $(x_t(\A))_{t\in\T(k)}=(x_t)_{t\in\T(k)}$. This shows that every element of $\tilde{S}(\NN,\ell)\times F(k,\ell)$ is the value of the mapping in \eqref{mapping} for some $\A\in C(\NN,\ell)$ with $(x_t(\A))_{t\in\T(k)}\in F(k,\ell)$. 
	
	Therefore,  \eqref{congell} follows. As an immediate consequence of disjointness, \eqref{cong} holds as well.
	\end{proof}

\begin{corollary}\label{compcor}
	It holds
	\[
	|C(\NN,k)|=\sum_{\ell=1}^{(2^k-2)\wedge n}\ell!\tilde{s}(n,\ell) |F(k,\ell)|.
	\]
\end{corollary}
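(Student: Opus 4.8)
The plan is to read the cardinality directly off the structural characterization in Theorem~\ref{mainthm} and then trim the range of the summation index using the vanishing property of ISNs. Since all the substantive combinatorial content lives in Theorem~\ref{mainthm} and Remark~\ref{interpretation}, this corollary should reduce to bookkeeping, and the only point deserving comment is the truncation of the upper limit.

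First I would invoke the isomorphism \eqref{cong}, which exhibits $C(\NN,k)$ as a \emph{disjoint} union of Cartesian products $\tilde{S}(\NN,\ell)\times F(k,\ell)$ over $\ell\in\cb{1,\ldots,2^k-2}$. A bijection preserves cardinality, the cardinality of a Cartesian product is the product of cardinalities, and disjointness permits summing, so this immediately yields
\[
|C(\NN,k)|=\sum_{\ell=1}^{2^k-2}|\tilde{S}(\NN,\ell)|\,|F(k,\ell)|.
\]
Next I would substitute the count \eqref{stildecount} from Remark~\ref{interpretation}, namely $|\tilde{S}(\NN,\ell)|=\ell!\,\tilde{s}(n,\ell)$, which rewrites the right-hand side as
\[
|C(\NN,k)|=\sum_{\ell=1}^{2^k-2}\ell!\,\tilde{s}(n,\ell)\,|F(k,\ell)|.
\]

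The only remaining step is to justify replacing the upper limit $2^k-2$ by $(2^k-2)\wedge n$. This is where Proposition~\ref{workhorse}(i) enters: whenever $\ell>n$ we have $\tilde{s}(n,\ell)=0$, so every summand whose index exceeds $n$ contributes zero and may be discarded without changing the total. Dropping these vanishing terms collapses the range to $\ell\in\cb{1,\ldots,(2^k-2)\wedge n}$ and gives the claimed formula. I do not anticipate a genuine obstacle here; the truncation is the single subtlety worth a sentence, and it follows at once from the fact that an ISN vanishes once its second argument outgrows its first.
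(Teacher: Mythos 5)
Your proof is correct and takes essentially the same route as the paper: invoke Theorem~\ref{mainthm} and substitute the count $|\tilde{S}(\NN,\ell)|=\ell!\,\tilde{s}(n,\ell)$ from Remark~\ref{interpretation}. Your explicit justification of truncating the upper limit to $(2^k-2)\wedge n$ via Proposition~\ref{workhorse}(i) spells out a step the paper leaves implicit, and it is exactly the right reason.
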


\begin{proof}
By \eqref{stildecount} in Remark~\ref{interpretation}, $|\tilde{S}(n,\ell)\times F(k,\ell)|=\ell!\tilde{s}(n,\ell) |F(k,\ell)|$ for each $\ell\in\cb{1,\ldots,2^k-2}$. Hence, the corollary follows from Theorem~\ref{mainthm}.
\end{proof}

Next, we aim to refine the result of Corollary~\ref{compcor} by showing that $F(k,\ell)$ is the empty set for small values of $\ell$. To that end, for a subset $T\subseteq\T(k)$ and $i,j\in\cb{1,\ldots,k}$ with $i\neq j$, let us define
\[
u_{ij}(T)\coloneqq\sum_{t\in T}1_{\cb{c}}(t_i)1_{\cb{e}}(t_j).
\]

\begin{proposition}\label{ell0result}
	Let
	\[
	\ell_0(k)\coloneqq \min\cb{\abs{T}\mid u_{ij}(T)\geq 1\ \forall i\neq j,\ T\subseteq\T(k)}
	\]
	Then, for every $\ell\in\cb{1,\ldots,2^k-2}$,
	\[
	\ell \geq \ell_0(k)\quad\Leftrightarrow\quad  F(k,\ell)\neq \emptyset.
	\]
	In particular,
	\[
	\ell_0(k)=\min\cb{\ell\mid F(k,\ell)\neq\emptyset}.
	\]
	\end{proposition}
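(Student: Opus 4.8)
The plan is to recognize that the defining constraints of $G(k)$ are exactly the constraints appearing in the definition of $\ell_0(k)$, once we pass from binary labelings to their supports. Concretely, I would first set up the bijection $(x_t)_{t\in\T(k)} \mapsto T \coloneqq \cb{t \in \T(k) \mid x_t = 1}$ between $\cb{0,1}^{\T(k)}$ and the subsets of $\T(k)$. Under this correspondence, $\sum_{t\in\T(k)} x_t = \abs{T}$, and for every $i \neq j$ the sum $\sum_{t \in \T(k)\colon t_i = c, t_j = e} x_t$ equals $u_{ij}(T)$. Hence $(x_t)_{t\in\T(k)} \in G(k)$ if and only if $u_{ij}(T) \geq 1$ for all $i \neq j$; call such a $T$ \emph{feasible}. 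Consequently $F(k,\ell) \neq \emptyset$ precisely when there exists a feasible $T$ with $\abs{T} = \ell$, and $\ell_0(k)$ is by definition the smallest cardinality of a feasible set.

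With this reformulation the easy direction is immediate: if $F(k,\ell) \neq \emptyset$, pick a feasible $T$ with $\abs{T} = \ell$; since $\ell_0(k)$ is the minimum cardinality over all feasible sets, $\ell \geq \ell_0(k)$.

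For the converse I would exploit the \emph{monotonicity} of the maps $u_{ij}$: since $u_{ij}(T)$ is a sum of nonnegative indicators over $t \in T$, we have $u_{ij}(T) \leq u_{ij}(T')$ whenever $T \subseteq T'$. Thus enlarging a feasible set keeps it feasible. Now fix $\ell$ with $\ell_0(k) \leq \ell \leq 2^k - 2$ and choose a feasible $T_0$ attaining $\abs{T_0} = \ell_0(k)$. Because $\abs{\T(k)} = 2^k - 2 \geq \ell$, there remain $2^k - 2 - \ell_0(k) \geq \ell - \ell_0(k)$ elements in $\T(k) \sm T_0$, so I can adjoin exactly $\ell - \ell_0(k)$ of them to obtain a set $T \supseteq T_0$ with $\abs{T} = \ell$. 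By monotonicity $T$ is feasible, whence $F(k,\ell) \neq \emptyset$. The ``in particular'' assertion is then just the reformulation of $\ell_0(k)$ as the least $\ell$ with $F(k,\ell)\neq\emptyset$.

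The only point requiring a separate check is that $\ell_0(k)$ is well defined, i.e.\ that the feasible family is nonempty; I would verify this by taking $T = \T(k)$ and observing that for each $i \neq j$ the count $u_{ij}(\T(k)) = 2^{k-2}$ (the two excluded vectors $\mathbf{e},\mathbf{c}$ never satisfy $t_i=c,t_j=e$ simultaneously), which is $\geq 1$ for $k \geq 2$; the case $k=1$ is vacuous since then $\T(1) = \emptyset$ and the index range $\cb{1,\ldots,2^k-2}$ is empty. I do not anticipate a genuine obstacle here: the crux is simply the monotonicity-plus-counting argument of the third paragraph, and the main thing to get right is that the available room $2^k-2-\ell_0(k)$ is exactly enough to reach any target $\ell$ in the stated range.
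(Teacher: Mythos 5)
Your proof is correct and follows essentially the same route as the paper's: the harder direction pads a minimum-cardinality feasible set $T_0$ with arbitrary extra index vectors (exactly the paper's construction, with the monotonicity of $u_{ij}$ left implicit there), and the easy direction extracts a feasible set from a nonempty $F(k,\ell)$ --- you use the full support, of size exactly $\ell$, where the paper assembles a witness set $\{t(i,j)\mid i\neq j\}$ contained in the support and bounds its cardinality. Your explicit support bijection and the check that $T=\T(k)$ is feasible (so that $\ell_0(k)$ is well defined and at most $2^k-2$) are tidy additions, but the substance of the argument is the same.
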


\begin{proof}
	Let $\ell\in\cb{1,\ldots,2^k-2}$. Suppose that $\ell\geq\ell_0(k)$. Let $T^\ast\subseteq\T(k)$ such that $|T^\ast|=\ell_0(k)$ and $u_{ij}(T^\ast)\geq 1$ for every $i,j\in\cb{1,\ldots,k}$ with $i\neq j$. By adding $\ell-\ell_0$ more elements to $T^\ast$ arbitrarily, one can find a set $T\subseteq\T(k)$ such that $T^\ast\subseteq T$ and $|T|=\ell$. For each $t\in\T(k)$, let us define a binary variable $x_t$ by $x_t=1$ if $t\in T$ and $x_t=0$ if $T\in\T(k)\sm T$. The assumed properties of $T^\ast$ ensure that $(x_t)_{t\in\T(k)}\in F(k,\ell)$. Hence, $F(k,\ell)\neq \emptyset$. In particular, the case $\ell=\ell_0$ implies that $\ell_0(k)\geq \min\cb{\ell\mid F(k,\ell)\neq\emptyset}$.
	
For the converse, suppose that $F(k,\ell)\neq \emptyset$. Let $(x_t)_{t\in\T(k)}\in F(k,\ell)$ and $i,j\in\cb{1,\ldots,k}$ with $i\neq j$. By the definition of $F(k,\ell)$, there exists $t(i,j)\in \T(k)$ such that $x_{t(i,j)}=1$, $t(i,j)_i=c$, $t(i,j)_j=e$. Let
	\[
	T=\cb{t(i,j)\mid i\neq j}.
	\]
	Note that
	\[
	u_{ij}(T)=\sum_{t\in T}1_{\cb{c}}(t_i)1_{\cb{e}}(t_j)\geq 1_{\cb{c}}(t(i,j)_i)1_{\cb{e}}(t(i,j)_j)=1\cdot 1=1.
	\]
	Hence, $\ell_0(k)\leq |T|$.	By the definition of $F(k,\ell)$ again,
	\[
	\ell_0(k)\leq |T|\leq \sum_{i\neq j}x_{t(i,j)}\leq \sum_{t\in\T(k)}x_t=\ell.
	\]
	Hence, $\ell_0(k)\leq \ell$. In particular, $\ell_0(k)\leq \min\cb{\ell\mid F(k,\ell)\neq\emptyset}$.
	\end{proof}

\begin{corollary}\label{ell0forcounting}
	It holds
	\[
	|C(\NN,k)|=\sum_{\ell=\ell_0(k)}^{(2^k-2)\wedge n}\ell!\tilde{s}(n,\ell) |F(k,\ell)|.
	\]
	\end{corollary}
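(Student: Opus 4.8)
The plan is to read this off directly from Corollary~\ref{compcor} by deleting the summands that are forced to be zero. Corollary~\ref{compcor} already supplies
\[
|C(\NN,k)|=\sum_{\ell=1}^{(2^k-2)\wedge n}\ell!\tilde{s}(n,\ell) |F(k,\ell)|,
\]
with the upper limit already truncated at $(2^k-2)\wedge n$ (since $\tilde{s}(n,\ell)=0$ for $\ell>n$ by Proposition~\ref{workhorse}(i)). Consequently the only task is to justify raising the lower limit of summation from $1$ to $\ell_0(k)$, i.e.\ to show that every index $\ell$ with $1\leq \ell<\ell_0(k)$ contributes a zero term.

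First I would invoke Proposition~\ref{ell0result}. For each $\ell\in\cb{1,\ldots,2^k-2}$ the stated equivalence $\ell\geq\ell_0(k)\Leftrightarrow F(k,\ell)\neq\emptyset$ gives, by contraposition, that $\ell<\ell_0(k)$ forces $F(k,\ell)=\emptyset$, hence $|F(k,\ell)|=0$. Therefore the corresponding summand $\ell!\tilde{s}(n,\ell)|F(k,\ell)|$ vanishes for every such $\ell$, independently of the value of $\ell!\tilde{s}(n,\ell)$. Dropping these zero terms from the sum in Corollary~\ref{compcor} leaves its value unchanged and replaces the lower limit $1$ by $\ell_0(k)$, which is exactly the asserted identity.

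There is essentially no obstacle here; the argument is purely bookkeeping on the summation range. The only point worth a remark is the degenerate case $\ell_0(k)>(2^k-2)\wedge n$, in which the displayed sum is empty and hence equals $0$. This is still correct: in that regime either no admissible $\ell$ satisfies $F(k,\ell)\neq\emptyset$ or else $\tilde{s}(n,\ell)=0$ by Proposition~\ref{workhorse}(i), so $C(\NN,k)$ is empty and the count is indeed $0$. Thus the formula holds as stated in all cases.
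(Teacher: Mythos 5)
Your proof is correct and takes exactly the same route as the paper, whose entire proof reads that the result is an immediate consequence of Corollary~\ref{compcor} and Proposition~\ref{ell0result}; you have simply spelled out the bookkeeping (terms with $\ell<\ell_0(k)$ vanish because $F(k,\ell)=\emptyset$) that the paper leaves implicit. The extra remark about the degenerate empty-sum case is harmless and consistent with the paper's conventions.
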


\begin{proof}
	This is an immediate consequence of Corollary~\ref{compcor} and Proposition~\ref{ell0result}.
	\end{proof}

\section{Partitioning the sets of labelings}\label{partitioningsec}
	

By Theorem~\ref{mainthm} and Corollary~\ref{ell0forcounting} of Section~\ref{problemdefn}, we are able to calculate the cardinality of the set $C(\NN, k)$ of constructive $k$-covers of $\NN$ in terms of ISNs $\tilde{s}(n,\ell)$ as well as the cardinalities of the sets $F(k,\ell)$ of $(k,\ell)$-labelings for a range of $\ell$ values. While it is easy to numerically calculate ISNs by \eqref{stilderes} and {\eqref{StirlingDefn}}, the calculation of $|F(k,\ell)|$ by brute force enumeration could be quite difficult even for small values of $k,\ell$. In this section, we introduce three notions of symmetry for the sets $F(k,\ell)$, $\ell\in\cb{\ell_0(k),\ldots,2^k-2}$, which yield an equivalence relation. It turns out that the equivalence classes of this relation provide substantial reduction in the computational effort to find the cardinalities $|F(k,\cdot)|$. 

Let $k\in\N$. Let us fix a nonempty subset $T$ of $\T(k)$. We call $T$ the \emph{branching set} of index vectors. Let us define
\[
\mathcal{Z}_T(k)\coloneqq \cb{y=(y_t)_{t\in T}\mid y_t\in\cb{0,1}\ \forall t\in T}.
\]
We also fix $\ell\in\N$ with $\ell_0(k)\leq \ell\leq 2^k-2$, where $\ell_0(k)$ is defined as in Proposition~\ref{ell0result}. In particular, $F(k,\ell)\neq\emptyset$. We may consider partitioning $F(k,\ell)$ with respect to the possible ways of assigning the binary variables $x_t$ associated to all $t\in T$. The next definition formalizes this idea.

\begin{definition}\label{DefnOfFm}
Let $y_t\in\cb{0,1}$ for each $t\in T$. Then, the set of all $(k,\ell)$-labelings with respect to $y= (y_t)_{t\in T}$ is defined as 
\[
F_{y}(k,\ell)\coloneqq\cb{(x_t)_{t\in\T(k)}\in F(k,\ell)\mid x_t = y_t \ \forall t\in T}.
\]
\end{definition}

If $\sum_{t\in T}y_t >\ell$, then $F_{y}(k,\ell)=\emptyset$ obviously. Let us introduce the set
\[
\mathcal{Y}_T(k,\ell)\coloneqq \cb{y=(y_t)_{t\in T}\in\mathcal{Z}_T(k)\mid \sum_{t\in T}y_t \leq \ell}.
\]

\begin{remark}
Given $y\in \Y_T(k,\ell)$, depending on the structure of $T$ and $y$, the set $F_y(k,\ell)$ may still be empty. Nevertheless, such cases will be detected in the computational procedure presented later in this section and we do not need to distinguish them \emph{a priori} in the theoretical development.
\end{remark}

We begin with a simple result that provides a partitioning of $F(k,\ell)$ into smaller sets.

\begin{proposition}\label{PropBranch}
Let $y,z\in\Y_T(k,\ell)$.
\begin{enumerate}[(i)]
	\item $F_{y}(k, \ell) \cap F_{z}(k, \ell) = \emptyset$ if $y\neq z$.
	\item It holds
	\[
	F(k,\ell)=\bigcup_{y\in \Y_T(k,\ell)}F_{y}(k, \ell).
	\]
\end{enumerate}
In particular,
\[
|F(k, \ell)| = \sum_{y\in \Y_T(k,\ell)} |F_y(k, \ell)|.
\]
\end{proposition}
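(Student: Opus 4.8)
The plan is to recognize the blocks $F_y(k,\ell)$ as the fibers of the map that forgets all coordinates outside the branching set $T$, so that the partition structure becomes automatic and only the choice of index set $\Y_T(k,\ell)$ requires justification. Concretely, I would introduce the restriction map $\rho\colon F(k,\ell)\to\mathcal{Z}_T(k)$ sending a labeling $(x_t)_{t\in\T(k)}$ to its restriction $(x_t)_{t\in T}$ to the branching set. By Definition~\ref{DefnOfFm}, for each $y\in\mathcal{Z}_T(k)$ the set $F_y(k,\ell)$ is exactly the fiber $\rho^{-1}(y)$, and distinct fibers of a map are automatically disjoint, which is the content of (i): if some labeling $x$ lay in both $F_y(k,\ell)$ and $F_z(k,\ell)$, then $x_t=y_t$ and $x_t=z_t$ for every $t\in T$, forcing $y=z$; contrapositively, $y\neq z$ yields $F_y(k,\ell)\cap F_z(k,\ell)=\emptyset$.

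For part (ii), the inclusion $\bigcup_{y\in\Y_T(k,\ell)}F_y(k,\ell)\subseteq F(k,\ell)$ holds by definition, since each block is a subset of $F(k,\ell)$. For the reverse inclusion I would take an arbitrary $x\in F(k,\ell)$ and set $y\coloneqq\rho(x)$; then $x\in F_y(k,\ell)$ by construction, and the only thing to verify is that this particular $y$ lies in the index set $\Y_T(k,\ell)$, that is, that $\sum_{t\in T}y_t\leq\ell$. This is the one substantive point, and it follows from non-negativity of the binary entries together with $T\subseteq\T(k)$: we have $\sum_{t\in T}y_t=\sum_{t\in T}x_t\leq\sum_{t\in\T(k)}x_t=\ell$, where the final equality is the defining constraint of $F(k,\ell)$. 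Hence $\rho$ takes values in $\Y_T(k,\ell)$, so the union over $y\in\Y_T(k,\ell)$ already exhausts $F(k,\ell)$.

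The ``in particular'' cardinality identity then follows from the fact that a finite set written as a disjoint union of subsets has cardinality equal to the sum of the cardinalities of those subsets: $F(k,\ell)$ is finite, being a subset of the finite collection of binary vectors indexed by $\T(k)$, and by (i) and (ii) it is the disjoint union of the finitely many blocks $F_y(k,\ell)$, $y\in\Y_T(k,\ell)$, so $|F(k,\ell)|=\sum_{y\in\Y_T(k,\ell)}|F_y(k,\ell)|$. I do not expect a genuine obstacle here; the argument is purely set-theoretic, and the only place where the specific definitions enter is the verification that restricting a labeling to $T$ keeps it within the budget $\sum_{t\in T}y_t\leq\ell$. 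This is precisely what makes $\Y_T(k,\ell)$, rather than all of $\mathcal{Z}_T(k)$, the natural index set, with any block corresponding to a $y$ that violates the budget being vacuously empty, as noted in the preceding remark.
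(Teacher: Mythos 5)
Your proof is correct and follows essentially the same route as the paper: disjointness is immediate from the definition of $F_y(k,\ell)$, and the covering part is established by restricting a labeling to $T$ and checking the budget bound $\sum_{t\in T}y_t=\sum_{t\in T}x_t\leq\sum_{t\in\T(k)}x_t=\ell$, exactly as in the paper's argument. Your framing via the restriction map $\rho$ and its fibers is a cosmetic repackaging, not a different method.
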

\begin{proof}
	\text{}
\begin{enumerate}[(i)]
\item This is an immediate consequence of Definition~\ref{DefnOfFm}.
\item The $\supseteq$ part of the equality is obvious since $F_{y}(k, \ell) \subseteq F(k, \ell)$ for each $y\in \Y_T(k,\ell)$. For the $\subseteq$ part, let $(x_t)_{t\in\T(k)}\in F(k, \ell)$. Let us define $y=(y_t)_{t\in T}$ by setting $y_t\coloneqq x_t$ for each $t\in T$. Then, $\sum_{t\in T}y_t= \sum_{t\in T}x_t\leq\sum_{t\in\T(k)}x_t=\ell$ so that $y\in\Y_T(k,\ell)$. Clearly, we also have $(x_t)_{t\in \T(k)}\in F_{y}(k,\ell)$. Hence, the $\subseteq$ part of the equality follows.
\end{enumerate}
The last statement follows directly from (i) and (ii).
\end{proof}

While Proposition~\ref{PropBranch} partitions $F(k,\ell)$ into the smaller sets $F_y(k,\ell)$, $y\in\Y_T(k,\ell)$, it may still be computationally expensive to calculate the cardinality of each of these sets by an enumerative method. By introducing three notions of symmetry below, we show that the cardinalities $|F_y(k,\ell)|$ are repeated for many $y\in\Y_T(k,\ell)$ so that we only need to calculate the distinct values of these cardinalities and the number of times each cardinality value is repeated.

\subsection{A symmetry based on permutations}\label{permsym}

The first notion of symmetry we introduce is based on permutations. To that end, let us denote by $\mathbb{S}_k$ the symmetric group of $\cb{1,\ldots,k}$, that is, the set of all permutations $\pi\colon\cb{1,\ldots,k}\to\cb{1,\ldots,k}$. Let $\pi\in\S_k$. Given $t=(t_1,\ldots,t_k)\in \cb{e,c}^k$, we may consider $t$ as a function $t\colon\cb{1,\ldots,k}\to \cb{e,c}$ and define the composition $t\circ \pi \colon\cb{1,\ldots, k}\to\cb{e,c}$ by
\[
(t\circ \pi)(i)=(t\circ\pi)_i\coloneqq t_{\pi(i)},\quad i\in\cb{1,\ldots,k},
\]
or, we may simply define $t\circ \pi$ as the vector
\[
t\circ \pi = ((t\circ \pi)_1,\ldots,(t\circ\pi)_k)\coloneqq (t_{\pi(1)},\ldots, t_{\pi(k)})\in\cb{e,c}^k.
\]
From the definition of $\T(k)$ (see \eqref{tk}), it is clear that $t\circ \pi \in \T(k)$ if and only if $t\in\T(k)$. Let us introduce the set
\[
T^\pi \coloneqq \cb{t\circ \pi \mid t\in T}.
\]
We call $T$ symmetric with respect to $\pi$ if $T=T^\pi$. In particular, it is always the case that $\T(k)$ is symmetric with respect to $\pi$. We denote by $\S_k^T$ the set of all permutations with respect to which $T$ is symmetric, that is,
\[
\S_k^T \coloneqq\cb{\pi\in\S_k\mid T=T^\pi}.
\]
Note that $\S_k^T\neq\emptyset$ as we always have $T=T^\pi$ when $\pi$ is the identity permutation.

The next proposition formulates how the number of $(k,\ell)$-labelings associated to a vector $y\in\Y_T(k,\ell)$ changes under the application of a permutation.

\begin{proposition}\label{equalityofcard}
Let $\pi\in\S_k^T$ and $y\in \Y_T(k,\ell)$. Define $y^\pi=(y^\pi_t)_{t\in T}$ by
\[
y^\pi_t \coloneqq y_{t\circ \pi},\quad t\in T.
\]
Then, $y^\pi \in \Y_T(k,\ell)$ and $|F_y(k,\ell)|=|F_{y^\pi}(k,\ell)|$.
\end{proposition}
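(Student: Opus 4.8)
The plan is to realize the permutation $\pi$ as a relabeling of the full coordinate family $(x_t)_{t\in\T(k)}$ and to show that this relabeling restricts to a bijection between $F_y(k,\ell)$ and $F_{y^\pi}(k,\ell)$. The key structural fact I would record first is that, since $T=T^\pi$, the map $\sigma_\pi\colon t\mapsto t\circ\pi$ is a bijection of $\T(k)$ onto itself that carries $T$ onto $T$; its inverse is $t\mapsto t\circ\pi^{-1}$, because $(t\circ\pi^{-1})\circ\pi=t$ coordinatewise. Part~(i) is then immediate: as $\sigma_\pi$ merely permutes the index set $T$, we get $\sum_{t\in T}y^\pi_t=\sum_{t\in T}y_{t\circ\pi}=\sum_{s\in T}y_s\leq\ell$, so $y^\pi\in\Y_T(k,\ell)$.

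For part~(ii), I would define for each $x=(x_t)_{t\in\T(k)}\in F(k,\ell)$ the relabeled family $x^\pi=(x^\pi_t)_{t\in\T(k)}$ by $x^\pi_t\coloneqq x_{t\circ\pi}$, and prove that $x\mapsto x^\pi$ is a bijection of $F(k,\ell)$ sending $F_y(k,\ell)$ onto $F_{y^\pi}(k,\ell)$. First I would check that $x^\pi\in F(k,\ell)$ whenever $x\in F(k,\ell)$. The cardinality constraint is preserved because $\sigma_\pi$ permutes $\T(k)$, giving $\sum_{t\in\T(k)}x^\pi_t=\sum_{s\in\T(k)}x_s=\ell$. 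To see that $x^\pi$ lies in $G(k)$, I would use the identity $t_i=(t\circ\pi)_{\pi^{-1}(i)}$, which shows that the substitution $s=t\circ\pi$ turns the condition $\cb{t_i=c,\ t_j=e}$ into $\cb{s_{\pi^{-1}(i)}=c,\ s_{\pi^{-1}(j)}=e}$; hence $\sum_{t\colon t_i=c,t_j=e}x^\pi_t=\sum_{s\colon s_{i'}=c,s_{j'}=e}x_s\geq 1$, where $i'=\pi^{-1}(i)\neq\pi^{-1}(j)=j'$, the last inequality holding because $x\in G(k)$.

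It remains to transfer the prescribed values on $T$ and to invert the map. Since $t\circ\pi\in T$ for every $t\in T$ by the symmetry of $T$, the relation $x\in F_y(k,\ell)$ gives $x^\pi_t=x_{t\circ\pi}=y_{t\circ\pi}=y^\pi_t$ for all $t\in T$, so $x^\pi\in F_{y^\pi}(k,\ell)$. The same construction applied to $\pi^{-1}$ (which also lies in $\S_k^T$) produces the inverse map and sends $F_{y^\pi}(k,\ell)$ back to $F_y(k,\ell)$, using $(y^\pi)^{\pi^{-1}}=y$; this establishes the bijection and hence $\abs{F_y(k,\ell)}=\abs{F_{y^\pi}(k,\ell)}$. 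The main obstacle I anticipate is purely bookkeeping: keeping the composition direction correct throughout, in particular verifying that the defining inequalities of $G(k)$ are permuted among themselves rather than distorted, which is exactly where the identity $t_i=(t\circ\pi)_{\pi^{-1}(i)}$ together with the bijectivity of $\sigma_\pi$ on $\T(k)$ does the work.
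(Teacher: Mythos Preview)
Your proposal is correct and follows essentially the same approach as the paper: define the relabeling $x^\pi_t=x_{t\circ\pi}$, verify it preserves the $G(k)$ constraints via the identity $t_i=(t\circ\pi)_{\pi^{-1}(i)}$, check the sum and the prescribed values on $T$, and conclude by exhibiting the inverse. The only cosmetic difference is that the paper checks injectivity and surjectivity separately, whereas you invoke $\pi^{-1}\in\S_k^T$ and $(y^\pi)^{\pi^{-1}}=y$ to produce the inverse directly; both arguments are equivalent.
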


\begin{proof}
Since $y\in\Y_T(k,\ell)$ and $T=T^\pi$, we have
\[
\sum_{t\in T}y^\pi_t = \sum_{t\in T}y_{t\circ\pi}=\sum_{t\in T}y_t \leq \ell.
\]
Hence, $y^\pi\in \Y_T(k,\ell)$.

To prove that $|F_y(k,\ell)|=|F_{y^\pi}(k,\ell)|$, it is sufficient to establish a bijection from $F_y(k,\ell)$ into $F_{y^\pi}(k,\ell)$. To that end, given $(x_t)_{t\in \T(k)}\in F_y(k,\ell)$, let us define $(x^\pi_t)_{t\in \T(k)}$ by
\[
x^\pi_t = x_{t\circ \pi},\quad t\in \T(k).
\]
We first show that $(x^\pi_t)_{t\in\T(k)}\in F_{y^\pi}(k,\ell)$. Let $i,j\in\cb{1,\ldots,k}$ with $i\neq j$. Denoting the inverse permuatation of $\pi$ by $\pi^{-1}$, we have
\begin{align*}
\sum_{t\in\T(k)\colon t_i=e,t_j=c}x^\pi_t
&= \sum_{t\in\T(k)\colon t_i=e,t_j=c}x_{t\circ \pi}\\
&=\sum_{t\in\T(k)\colon (t\circ\pi)_{\pi^{-1}(i)}=e,(t\circ\pi)_{\pi^{-1}(j)}=c}x_{t\circ \pi}\\
&=\sum_{t\in\T(k)\colon t_{\pi^{-1}(i)}=e,t_{\pi^{-1}(j)}=c}x_{t},
\end{align*}
where we make a change of variables using the fact that $\T(k)=(\T(k))^\pi$ in order to get the last equality. Since $i\neq j$, we have $\pi^{-1}(i)\neq \pi^{-1}(j)$. As we also have $(x_t)_{t\in\T(k)}\in G(k)$ (see \eqref{defng}), it follows that
\[
\sum_{t\in\T(k)\colon t_i=e,t_j=c}x^\pi_t=\sum_{t\in\T(k)\colon t_{\pi^{-1}(i)}=e,t_{\pi^{-1}(j)}=c}x_{t}\geq 1.
\]
Similarly, since $\T(k)=(\T(k))^\pi$,
\[
\sum_{t\in\T(k)}x^\pi_t = \sum_{t\in \T(k)}x_{t\circ\pi}=\sum_{t\in\T(k)}x_t = \ell.
\]
On the other hand, for each $t\in T$, we have $t\circ\pi\in T$ so that
\[
x^\pi_t = x_{t\circ \pi} = y_{t\circ \pi}=y^\pi_t.
\]
Therefore, $(x^\pi_t)_{t\in\T(k)}\in F_{y^\pi}(k,\ell)$.

It remains to check that the mapping $(x_t)_{t\in\T(k)}\mapsto (x_t^\pi)_{t\in\T(k)}$ is indeed a bijection from $F_y(k,\ell)$ into $F_{y^\pi}(k,\ell)$. Let $(x_t)_{t\in \T(k)},$ $(\bar{x}_t)_{t\in\T(k)}\in F_y(k,\ell)$ such that $x_t^\pi =\bar{x}_t^\pi$ for every $t\in\T(k)$, that is, $x_{t\circ \pi}=\bar{x}_{t\circ\pi}$ for every $t\in\T(k)$. Since $\T(k)=(\T(k))^\pi$, this is equivalent to having $x_t = \bar{x}_t$ for every $t\in\T(k)$. Hence, $(x_t)_{t\in\T(k)}\mapsto (x_t^\pi)_{t\in\T(k)}$ is injective. Next, let $(z_t)_{t\in\T(k)}\in F_{y^\pi}(k,\ell)$. Let us define $(x_t)_{t\in\T(k)}$ by
\[
x_t \coloneqq z^{\pi^{-1}}_t=z_{t\circ \pi^{-1}},\quad t\in\T(k).
\]
Let $i,j\in\cb{1,\ldots,k}$ such that $i\neq j$. Hence, we have $\pi(i)\neq\pi(j)$ so that
\[
\sum_{t\in\T(k)\colon t_i=e,\ t_j=c}x_t=\sum_{t\in\T(k)\colon t_i=e,\ t_j=c} z_{t\circ \pi^{-1}}=\sum_{t\in\T(k)\colon t_{\pi(i)}=e,\ t_{\pi(j)}=c} z_{t}\geq 1.
\]
Next, since $\T(k)=(\T(k))^{\pi^{-1}}$, we have
\[
\sum_{t\in\T(k)}x_t = \sum_{t\in\T(k)}z_{t\circ \pi^{-1}}=\sum_{t\in\T(k)}z_t = \ell.
\]
On the other hand, since $T=T^\pi$, we also have $T=T^{\pi^{-1}}$. Hence, for each $t\in T$, we have $t\circ\pi^{-1}\in T$ so that
\[
x_t = z_{t\circ\pi^{-1}}=y_{t\circ \pi^{-1}}^{\pi}=y_t.
\]
Therefore, $(x_t)_{t\in\T(k)}\in F_y(k,\ell)$, that is, $(x_t)_{t\in\T(k)}\mapsto (x_t^\pi)_{t\in\T(k)}$ is surjective as well.
\end{proof}

\subsection{A symmetry based on taking complements}\label{compsym}

In addition to the above notion of permutation-based symmetry, we introduce a second type of symmetry based on taking ``complements,'' that is, based on changing the roles of $e$ and $c$ in the index vectors. To be more precise, let us define two mappings $\alpha_1,\alpha_2\colon\cb{e,c}\to\cb{e,c}$ by
\[
\alpha_1(e)=\alpha_2(c)=e,\quad \alpha_1(c)=\alpha_2(e)=c.
\]
In other words, $\alpha_1$ is the identity mapping and $\alpha_2$ switches $e$ and $c$. Let us formally define the set $\mathbb{A}=\cb{\alpha_1,\alpha_2}$, which is indeed the symmetric group of $\cb{e,c}$. Let $\alpha\in\mathbb{A}$. Similar to what is done in Section \ref{permsym}, we may regard each $t=(t_1,\ldots,t_k)\in\cb{e,c}^k$ as a function $t\colon\cb{1,\ldots,k}\to\cb{e,c}$ and define the composition $\alpha\circ t\colon \cb{1,\ldots,k}\to\cb{e,c}$ by
\[
(\alpha\circ t)(i)=(\alpha\circ t)_i \coloneqq \alpha(t_i),\quad i\in\cb{1,\ldots,k},
\]
or, we define $\alpha\circ t$ as the vector
\[
\alpha\circ t = \of{(\alpha\circ t)_1,\ldots,(\alpha\circ t)_k}\coloneqq \of{\alpha(t_1),\ldots,\alpha(t_k)}\in \cb{e,c}^k.
\]
Clearly, $\alpha\circ t\in\T(k)$ if and only if $t\in \T(k)$. Let us also define
\[
T^\alpha \coloneqq \cb{\alpha\circ t\mid t\in T}
\]
and
\[
\mathbb{A}^T \coloneqq \cb{\alpha\in \mathbb{A}\mid T=T^\alpha}.
\]
Since $\alpha_1$ is the identity mapping, we always have $T=T^{\alpha_1}$ so that $\mathbb{A}^T \neq \emptyset$.

In the next proposition, we relate the numbers of $(k,\ell)$-labelings associated to a vector $y\in\Y_T(k,\ell)$ before and after taking complements.

\begin{proposition}\label{equalityofcard2}
	Let $\alpha\in \mathbb{A}^T$ and $y\in \Y_T(k,\ell)$. Define ${}^\alpha y=({}^\alpha y_t)_{t\in T}$ by
	\[
	{}^\alpha y_t \coloneqq y_{\alpha\circ t},\quad t\in T.
	\]
	Then, ${}^\alpha y\in \Y_T(k,\ell)$ and $|F_y(k,\ell)|=|F_{{}^\alpha y}(k,\ell)|$.
	\end{proposition}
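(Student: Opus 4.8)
The plan is to mirror the proof of Proposition~\ref{equalityofcard} almost verbatim, replacing the permutation action $t\mapsto t\circ\pi$ by the complement action $t\mapsto\alpha\circ t$. Two facts drive the argument: each $\alpha\in\mathbb{A}$ is a bijection of $\cb{e,c}$ (in fact an involution, since $\mathbb{A}$ is the symmetric group of $\cb{e,c}$, so $\alpha\circ\alpha=\alpha_1$), and $\alpha\in\mathbb{A}^T$ means $T=T^\alpha$. From the stated equivalence $\alpha\circ t\in\T(k)\Leftrightarrow t\in\T(k)$ together with injectivity, the map $t\mapsto\alpha\circ t$ is a bijection of $\T(k)$ onto itself, and, because $T=T^\alpha$, it restricts to a bijection of $T$ onto itself. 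First I would use this to check ${}^\alpha y\in\Y_T(k,\ell)$: the substitution $s=\alpha\circ t$ turns $\sum_{t\in T}{}^\alpha y_t=\sum_{t\in T}y_{\alpha\circ t}$ into $\sum_{s\in T}y_s=\sum_{t\in T}y_t\le\ell$, so the defining constraint of $\Y_T(k,\ell)$ is preserved.

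Next I would introduce the candidate bijection. Given $(x_t)_{t\in\T(k)}\in F_y(k,\ell)$, define $({}^\alpha x_t)_{t\in\T(k)}$ by ${}^\alpha x_t\coloneqq x_{\alpha\circ t}$, exactly paralleling the map in Proposition~\ref{equalityofcard} with $\alpha\circ t$ in place of $t\circ\pi$. To show $({}^\alpha x_t)_{t\in\T(k)}\in F_{{}^\alpha y}(k,\ell)$, three items must be verified. The cardinality condition $\sum_{t\in\T(k)}{}^\alpha x_t=\ell$ follows from the same reindexing applied over all of $\T(k)$. The agreement ${}^\alpha x_t={}^\alpha y_t$ for $t\in T$ follows because $\alpha\circ t\in T^\alpha=T$, whence $x_{\alpha\circ t}=y_{\alpha\circ t}={}^\alpha y_t$ using $(x_t)_{t\in\T(k)}\in F_y(k,\ell)$. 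Both of these are routine.

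The one genuinely new point, and the step I expect to be the crux, is the verification of the $G(k)$ inequality \eqref{defng} for $({}^\alpha x_t)_{t\in\T(k)}$. Fixing $i\neq j$ and substituting $s=\alpha\circ t$, so that $t_i=\alpha(s_i)$ and $t_j=\alpha(s_j)$, the sum $\sum_{t\in\T(k)\colon t_i=c, t_j=e}{}^\alpha x_t$ becomes $\sum_{s\in\T(k)\colon s_i=\alpha(c), s_j=\alpha(e)} x_s$. Here the subtlety of complementation surfaces: when $\alpha=\alpha_2$ we have $\alpha(c)=e$ and $\alpha(e)=c$, so the index condition reads $s_i=e, s_j=c$, which is the $G(k)$ constraint associated to the \emph{reversed} ordered pair $(j,i)$ rather than $(i,j)$. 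Because $G(k)$ imposes a non-emptiness constraint for every ordered pair of distinct indices, this reversed condition is again one of the defining inequalities of $G(k)$, and hence $\sum_{s\colon s_i=e, s_j=c} x_s\ge 1$. (For $\alpha=\alpha_1$ the condition is unchanged and there is nothing to prove.) Thus the transformed labeling lies in $G(k)$, and therefore in $F_{{}^\alpha y}(k,\ell)$.

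Finally I would settle bijectivity by exploiting that $\alpha$ is an involution, rather than constructing an explicit inverse as in Proposition~\ref{equalityofcard}. Since $\alpha\circ\alpha=\alpha_1$ is the identity, one checks ${}^\alpha({}^\alpha y)=y$ and that applying the same construction to a labeling in $F_{{}^\alpha y}(k,\ell)$ returns the original labeling in $F_y(k,\ell)$; hence the map $(x_t)_{t\in\T(k)}\mapsto({}^\alpha x_t)_{t\in\T(k)}$ and its analogue in the reverse direction are mutually inverse, yielding the bijection and the equality $|F_y(k,\ell)|=|F_{{}^\alpha y}(k,\ell)|$. Alternatively, injectivity and surjectivity can be argued directly by the same reindexing used throughout, exactly paralleling the final paragraph of the proof of Proposition~\ref{equalityofcard}.
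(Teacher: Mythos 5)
Your proposal is correct and follows essentially the same route as the paper: the same map ${}^\alpha x_t = x_{\alpha\circ t}$, the same reindexing arguments, and the same key observation that complementation turns the $(i,j)$ constraint of $G(k)$ into the reversed-pair constraint, which is still one of the defining inequalities since $G(k)$ quantifies over all ordered pairs. The only cosmetic difference is that you settle bijectivity by noting $\alpha$ is an involution, whereas the paper omits this step by appealing to the argument of Proposition~\ref{equalityofcard}; your shortcut is valid and, if anything, slightly cleaner.
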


\begin{proof}
The result is trivial for $\alpha=\alpha_1$. Let us assume that $\alpha=\alpha_2$. Since $y\in\Y_T(k,\ell)$ and $T=T^{\alpha}$, we have
	\[
	\sum_{t\in T}{}^\alpha y_t = \sum_{t\in T}y_{\alpha\circ t}=\sum_{t\in T}y_t \leq \ell.
	\]
	Hence, ${}^\alpha y\in \Y_T(k,\ell)$.

	To prove that $|F_y(k,\ell)|=|F_{{}^\alpha y}(k,\ell)|$, we construct a bijection from $F_y(k,\ell)$ to $F_{{}^\alpha y}(k,\ell)$ as follows. Given $(x_t)_{t\in\T(k)}\in F_y(k,\ell)$, let us define $({}^\alpha x_t)_{t\in\T(k)}$ by
	\[
	{}^\alpha x_t =x_{\alpha \circ t},\quad t\in\T(k).
	\]
	Let $i,j\in\cb{1,\ldots,k}$ with $i\neq j$. Since $(x_t)_{t\in \T(k)}\in G(k)$, we have
	\begin{align*}
	\sum_{t\in\T(k)\colon t_i=e,t_j=c}{}^\alpha x_t &= \sum_{t\in\T(k)\colon t_i=e,t_j=c}x_{\alpha \circ t}\\
	&= \sum_{t\in\T(k)\colon \alpha(t_i)=c,\alpha(t_j)=e}x_{\alpha \circ t}\\
	&=\sum_{t\in\T(k)\colon t_i=c,t_j=e}x_t\geq 1.
	\end{align*}
	Similarly,
	\[
	\sum_{t\in\T(k)}{}^\alpha x_t= \sum_{t\in\T(k)}x_{\alpha \circ t}=\sum_{t\in\T(k)}x_t = \ell
	\]
	and 
	\[
	{}^\alpha x_t = x_{\alpha \circ t}=y_{\alpha \circ t}={}^\alpha y_t,\quad t\in T.
	\]
	Hence, $({}^\alpha x_t)_{t\in\T(k)}\in F_{{}^\alpha y}(k,\ell)$. Using a similar argument as in the proof of Proposition \ref{equalityofcard}, it can be shown that the mapping $(x_t)_{t\in\T(k)}\mapsto ({}^\alpha x_t)_{t\in\T(k)}$ is a bijection. The details are omitted.
	\end{proof}

\subsection{A symmetry based on impact sets}\label{impactsym}

In this subsection, we introduce a third notion of symmetry based on the idea that two binary vectors $y,z\in\mathcal{Z}_T(k)$ might impose the same set of constraints in the definition of $G(k)$, see \eqref{defng}, which we will refer to as the \emph{impact set} of these vectors.

Given $y=(y_t)_{t\in T}\in\mathcal{Z}_T(k)$, we define the \emph{impact set} $\D_T(y)$ of $y$ as
\[
\D_T(y)\coloneqq \cb{(i,j)\in\cb{1,\ldots,k}^2\mid \exists t\in T\colon (t_i=c\ \wedge\ \ t_j=e\ \wedge\  y_t=1)}.
\]
The next theorem provides a relationship for the values of $|F_y(k,\cdot)|$ and $|F_z(k,\cdot)|$ when $y,z\in\mathcal{Z}_T(k)$ have the same impact set.

\begin{theorem}\label{simeqprop}
	Let $y,z\in \mathcal{Z}_T(k)$ be such that $\D_T(y)=\D_T(z)$. Let
	\[
	w \coloneqq \sum_{t\in T}y_t - \sum_{t\in T}z_t.
	\]
	Suppose that $w\geq 0$. The following results hold.
	\begin{enumerate}[(i)]
		\item $|F_y(k,\ell+w)|=|F_z(k,\ell)|$ for every $\ell\in\N$ such that $\ell_0(k)\leq \ell \leq \ell+w\leq 2^k-2$.
		\item $|F_y(k,\ell+w)|=0$ for every $\ell\in\N$ such that $\ell< \ell_0(k)\leq \ell+w$.
		\item $|F_z(k,\ell)|=0$ for every $\ell\in\N$ such that $\ell\leq  2^k-2 <\ell+w$.
	\end{enumerate}
\end{theorem}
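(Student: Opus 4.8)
The plan is to produce a single ``swap'' bijection that settles (i) outright, and then to read off (ii) and (iii) from that bijection together with an elementary counting bound and Proposition~\ref{ell0result}. Throughout, abbreviate $p_y\coloneqq\sum_{t\in T}y_t$ and $p_z\coloneqq\sum_{t\in T}z_t$, so that $w=p_y-p_z$, and recall that $|\T(k)|=2^k-2$.

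First I would set up the core map. Whenever both indices lie in the admissible range (so that both sets are defined), define $\Phi\colon F_z(k,\ell)\to F_y(k,\ell+w)$ by leaving every off-$T$ coordinate untouched and overwriting the $T$-block with $y$: for $(x_t)_{t\in\T(k)}\in F_z(k,\ell)$ set $(\Phi x)_t=y_t$ for $t\in T$ and $(\Phi x)_t=x_t$ for $t\in\T(k)\sm T$. Any element of $F_z(k,\ell)$ carries exactly $\ell-p_z$ ones off $T$, so $\Phi x$ carries $p_y+(\ell-p_z)=\ell+w$ ones in total, which matches the cardinality requirement of $F_y(k,\ell+w)$. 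The reverse overwrite (replacing the $T$-block by $z$) is a two-sided inverse, so once I verify that $\Phi$ actually lands in $F_y(k,\ell+w)$, bijectivity and hence (i) follow at once.

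The heart of the argument, and the step I expect to be the main obstacle, is checking that $\Phi$ preserves the covering constraints defining $G(k)$; this is exactly where the hypothesis $\D_T(y)=\D_T(z)$ is used. Fix $i\neq j$ and split $\sum_{t\colon t_i=c,\,t_j=e}(\Phi x)_t$ into its $T$-part and its off-$T$-part. The off-$T$-part coincides for $x$ and $\Phi x$. The $T$-part of $\Phi x$ equals $\sum_{t\in T\colon t_i=c,\,t_j=e}y_t$, which is positive precisely when $(i,j)\in\D_T(y)$, while the corresponding $T$-part of the preimage $x$ is positive precisely when $(i,j)\in\D_T(z)$. Since $\D_T(y)=\D_T(z)$, these two $T$-parts are simultaneously positive or simultaneously zero. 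If $(i,j)$ lies in the common impact set, the $T$-part of $\Phi x$ alone already meets the constraint; if it does not, both $T$-parts vanish, and feasibility of $x$ in $G(k)$ forces the shared off-$T$-part to be at least one, which then carries the constraint for $\Phi x$. Hence $\Phi x\in F_y(k,\ell+w)$, completing (i).

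For (ii), if $\ell+w>2^k-2$ then $F_y(k,\ell+w)=\emptyset$ trivially, as no labeling has more than $2^k-2$ ones; otherwise both sets are defined, $\Phi$ applies, and $\ell<\ell_0(k)$ gives $F(k,\ell)=\emptyset$ by Proposition~\ref{ell0result}, so $F_z(k,\ell)=\emptyset$ and with it its image $F_y(k,\ell+w)$. For (iii) I would argue by contradiction with a counting bound, independently of $\Phi$: were there some $(x_t)\in F_z(k,\ell)$, its off-$T$ coordinates would contribute $\ell-p_z$ ones, and this cannot exceed the number $(2^k-2)-|T|$ of off-$T$ coordinates; combining this with $p_y\leq|T|$ yields $\ell+w=p_y+(\ell-p_z)\leq(2^k-2)$, contradicting $\ell+w>2^k-2$. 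Therefore $F_z(k,\ell)=\emptyset$, which is (iii).
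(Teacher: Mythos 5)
Your proposal is correct and takes essentially the same route as the paper: the identical overwrite-the-$T$-block bijection, with the hypothesis $\D_T(y)=\D_T(z)$ used exactly as in the paper's proof to transfer the covering constraints (split into the $T$-part and the off-$T$-part), and Proposition~\ref{ell0result} invoked for part (ii). The only cosmetic difference is part (iii), where you argue by a direct counting bound ($\sum_{t\in T}y_t\leq|T|$ plus the bound on off-$T$ ones), whereas the paper simply extends the definition of $F(k,\cdot)$ to all $\ell\in\N$, so that $F_y(k,\ell+w)=\emptyset$ trivially when $\ell+w>2^k-2$, and reads off (iii) from the same bijection.
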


\begin{proof}
	We consider the sets $F_y(k,\ell), F_z(k,\ell)$ defined by \eqref{DefnOfF} for every $\ell\in\N$. (Hence, we extend the definition in \eqref{DefnOfF} for $\ell\geq 2^k-2$). Let us fix $\ell\in\N$. We establish a bijection from $F_z(k,\ell)$ to $F_y(k,\ell+w)$. Given $x=(x_t)_{t\in\T(k)}\in F_z(k,\ell)$, let us define $\bar{x}=(\bar{x}_t)_{t\in\T(k)}$ by
	\[
	\bar{x}_t = \begin{cases}y_t&\text{if } t\in T,\\ x_t & \text{if }t\notin T.\end{cases}
	\]
	Let $i,j\in\cb{1,\ldots,k}$ with $i\neq j$. First, suppose that $(i,j)\in \D_T(y)=\D_T(z)$. Hence, there exists $t^1\in T$ such that $t^1_i=c$, $t^1_j=e$, $y_{t^1}=1$. So
	\[
	\sum_{t\in\T(k)\colon t_i=c,t_j=e}\bar{x}_t\geq \bar{x}_{t^1}=y_{t^1}=1.
	\]
	Next, suppose that $(i,j)\notin \D_T(y)=\D_T(z)$. Since $x\in F_z(k,\ell)$, there exists $t^2\in \T(k)\setminus T$ such that $t^2_i=c$, $t^2_j=e$, $x_{t^2}=1$. So
	\[
	\sum_{t\in\T(k)\colon t_i=c,t_j=e}\bar{x}_t\geq \bar{x}_{t^2}=x_{t^2}=1.
	\]
	Moreover,
	\[
	\sum_{t\in \T(k)}\bar{x}_t = \sum_{t\in T}y_t +\sum_{t\in\T(k)\setminus T}x_t =\sum_{t\in T}y_t +\ell - \sum_{t\in T}x_t=\sum_{t\in T}y_t +\ell - \sum_{t\in T}z_t=\ell+w.
	\]
	Hence, $\bar{x}\in F_y(k,\ell+w)$.
	
	Next, we show that the mapping $x\mapsto \bar{x}$ is a bijection from $F_z(k,\ell)$ into $F_y(k,\ell+w)$. Let $x^1=(x^1_t)_{t\in\T(k)}, x^2=(x^2_t)_{t\in \T(k)}\in F_z(k,\ell)$ such that their images are equal, that is, $\bar{x}^1_t = \bar{x}^2_t$ for every $t\in\T(k)$. From the definition of the mapping, it is immediate that $x^1_t=x^2_t$ for every $t\in \T(k)\setminus T$. On the other hand, $x^1_t=x^2_t=z_t$ for every $t\in T$ since $x^1,x^2\in F_z(k,\ell)$. Therefore, $x^1=x^2$. This proves that the mapping is injective. Let $\tilde{x}=(\tilde{x}_t)_{t\in\T(k)}\in F_y(k,\ell+w)$. Define $x=(x_t)_{t\in\T(k)}$ by
	\[
	x_t=\begin{cases}z_t&\text{if }t\in T,\\ \tilde{x}_t&\text{if }t\notin T.\end{cases}
	\]
	It is not difficult to check that $x\in F_z(k,\ell)$ and $\bar{x}=\tilde{x}$, which shows that the mapping is surjective. Hence, thanks to the bijection, we conclude that $|F_y(k,\ell+w)|=|F_z(k,\ell)|$ for every $\ell\in\N$. If $\ell<\ell_0(k)$, then $F_z(k,\ell)=\emptyset$ so that $F_y(k,\ell+w)=\emptyset$ as well. Similarly, if $2^k-2<\ell+w $, then $F_y(k,\ell+w)=\emptyset$ so that $F_z(k,\ell)=\emptyset$ as well. Hence, all three results hold.
\end{proof}

\subsection{Equivalence relation for the symmetries}\label{equiv1}

Given $t\in \T(k)$, $x_t\in\cb{e,c}$, $\pi\in\S_k$ and $\alpha\in\mathbb{A}$, note that 
\[
({}^\alpha x_t)^\pi = {}^\alpha (x^\pi_t)=x_{\alpha\circ t\circ \pi}.
\]
Hence, we simply write ${}^\alpha x^\pi_t\coloneqq x_{\alpha\circ t\circ \pi}$ for the common value. On $\mathcal{Z}_T(k)$, let us define the relation $\equiv$ by
\begin{equation}\label{equivalence}
y\equiv z\quad \Leftrightarrow\quad \exists(\pi,\alpha)\in\S_k^T\times\mathbb{A}^T\colon \D_T(z)=\D_T({}^\alpha y^\pi)
\end{equation}
for each $y=(y_t)_{t\in T},z=(z_t)_{t\in T}\in\mathcal{Z}_T(k)$.
\begin{proposition}\label{equivrel}
The relation $\equiv$ defined by \eqref{equivalence} is an equivalence relation on $\mathcal{Z}_T(k)$. 
	\end{proposition}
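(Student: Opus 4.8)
The plan is to recognize $\equiv$ as the pullback, through the impact-set map $y\mapsto\D_T(y)$, of the orbit relation of a group acting on subsets of $\cb{1,\ldots,k}^2$. First I would record that $\S_k^T$ and $\mathbb{A}^T$ are subgroups of $\S_k$ and $\mathbb{A}$, respectively: each is the stabilizer of $T$ under the action $T\mapsto T^\pi$ (a right action) and $T\mapsto T^\alpha$ (a left action), and stabilizers are closed under products and inverses (for $\mathbb{A}^T$ this is immediate since $\mathbb{A}\cong\mathbb{Z}_2$). In particular the identity permutation and $\alpha_1$ always lie in these sets, and for $\pi\in\S_k^T$, $\alpha\in\mathbb{A}^T$ one has $\alpha\circ t\circ\pi\in T$ for every $t\in T$, so that ${}^\alpha y^\pi=(y_{\alpha\circ t\circ\pi})_{t\in T}$ is a well-defined element of $\mathcal{Z}_T(k)$ and $\D_T({}^\alpha y^\pi)$ makes sense.

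The key step is a transformation lemma expressing $\D_T({}^\alpha y^\pi)$ \emph{solely in terms of} $\D_T(y)$. Carrying out the change of variables $s=\alpha\circ t\circ\pi$ in the definition of $\D_T$, and using $T^\pi=T=T^\alpha$ so that $s$ ranges over $T$ as $t$ does, I would show that
\[
\D_T({}^\alpha y^\pi)=\Psi_{(\pi,\alpha)}\of{\D_T(y)},
\]
where $\Psi_{(\pi,\alpha)}$ acts on a set $D\subseteq\cb{1,\ldots,k}^2$ by relabeling each coordinate through $\pi$ and, in the complement case $\alpha=\alpha_2$, additionally transposing each pair: explicitly $\Psi_{(\pi,\alpha_1)}(D)=\cb{(\pi(i),\pi(j))\mid (i,j)\in D}$ and $\Psi_{(\pi,\alpha_2)}(D)=\cb{(\pi(i),\pi(j))\mid (j,i)\in D}$. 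The transpose in the second case is exactly the effect of swapping the roles of $c$ and $e$ in the conditions $t_i=c$, $t_j=e$, and this is the one genuinely delicate point of the bookkeeping.

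With the lemma in hand, the remaining work is group-theoretic and routine. Each $\Psi_{(\pi,\alpha)}$ is a bijection of the power set of $\cb{1,\ldots,k}^2$, with $\Psi_{(\mathrm{id},\alpha_1)}=\mathrm{id}$, composition law $\Psi_{(\pi,\alpha)}\circ\Psi_{(\sigma,\beta)}=\Psi_{(\pi\sigma,\alpha\beta)}$ (mirroring the vector identity ${}^\alpha({}^\beta y^\sigma)^\pi={}^{\alpha\beta}y^{\pi\sigma}$), and inverse $\Psi_{(\pi,\alpha)}^{-1}=\Psi_{(\pi^{-1},\alpha)}$ since $\alpha$ is an involution; moreover $(\pi\sigma,\alpha\beta)$ and $(\pi^{-1},\alpha)$ remain in $\S_k^T\times\mathbb{A}^T$ because these are subgroups. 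Hence $\cb{\Psi_{(\pi,\alpha)}}$ is a group of bijections, and $y\equiv z$ holds precisely when $\D_T(z)$ lies in the orbit of $\D_T(y)$. Reflexivity then follows from the choice $(\pi,\alpha)=(\mathrm{id},\alpha_1)$; symmetry from applying $\Psi_{(\pi,\alpha)}^{-1}=\Psi_{(\pi^{-1},\alpha)}$ to the defining equality $\D_T(z)=\Psi_{(\pi,\alpha)}(\D_T(y))$; and transitivity from composing two such equalities through the composition law.

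The main obstacle, as indicated, is establishing the transformation lemma correctly: verifying that the complement operation $\alpha_2$ induces the \emph{transpose} on index pairs rather than merely relabeling them, and confirming that the change of variables is legitimate (that $s=\alpha\circ t\circ\pi$ is a bijection of $T$), which is precisely where $\pi\in\S_k^T$ and $\alpha\in\mathbb{A}^T$ are used essentially. Once this is secured, the three properties are immediate from the group structure, and no non-emptiness or cardinality considerations (such as those in Propositions~\ref{equalityofcard} and~\ref{equalityofcard2}) enter, since $\equiv$ is defined purely at the level of impact sets.
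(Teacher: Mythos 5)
Your proof is correct, and it reaches the result by a genuinely different route than the paper's. The paper never isolates your transformation lemma $\D_T({}^\alpha y^\pi)=\Psi_{(\pi,\alpha)}\of{\D_T(y)}$; instead it verifies symmetry and transitivity by direct element-chasing, and each verification is a two-step chase: a pair $(i,j)\in\D_T(y)$ is pushed forward to $(\pi(i),\pi(j))$ (or, when $\alpha=\alpha_2$, to the transposed pair $(\pi(j),\pi(i))$) inside $\D_T({}^\alpha y^\pi)=\D_T(z)$, then the definition of the impact set of $z$ is invoked to extract a fresh witness $t''\in T$, which is transported back to witness $(i,j)\in\D_T({}^\alpha z^{\pi^{-1}})$; moreover the closure facts $\pi^{-1}\in\S_k^T$, $\sigma\circ\pi\in\S_k^T$ and $\alpha\circ\beta\in\mathbb{A}^T$ are proved inline by computation rather than quoted as stabilizer/subgroup facts. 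Your argument isolates the one genuinely computational point --- the change of variables $s=\alpha\circ t\circ\pi$, legitimate because $T^\pi=T^\alpha=T$, with the transpose arising exactly when $\alpha=\alpha_2$, which agrees with the paper's inline derivation --- and then obtains all three axioms at once from the general principle that the pullback, along any map, of the orbit relation of a group of bijections is an equivalence relation. What your organization buys is brevity, an explicit statement of the composition identity ${}^\alpha({}^\beta y^\sigma)^\pi={}^{\alpha\beta}y^{\pi\sigma}$ (the paper uses it only implicitly, in the form ${}^{\alpha\circ\beta}y^{\sigma\circ\pi}$ inside its transitivity step), and the elimination of the duplicated forward-and-backward inclusion arguments; what the paper's proof buys is that it stays entirely elementary and self-contained, with no group-action language. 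Since your transformation lemma, the subgroup observations, and the composition law are all verified correctly (in particular the fact that transposition of pairs commutes with relabeling, which is what makes $\Psi_{(\pi,\alpha)}\circ\Psi_{(\sigma,\beta)}=\Psi_{(\pi\sigma,\alpha\beta)}$ hold), your proof is a legitimate, arguably cleaner, substitute for the paper's.
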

\begin{proof}
	The reflexivity of $\equiv$ follows since the identity permutation on $\cb{1,\ldots,k}$ is always a member of $\S_k^T$. To show that $\equiv$ is symmetric, let $y,z\in \mathcal{Z}_T(k)$ such that $y\equiv z$. So $\D_T(z)=\D_T({}^\alpha y^\pi)$ for some $\pi\in\S_k^T$ and $\alpha\in\mathbb{A}^T$. Then,
	\[
	T^{\pi^{-1}}=\cb{t\circ \pi^{-1}\mid t\in T}=\cb{\bar{t}\circ\pi\circ\pi^{-1}\mid \bar{t}\in T}=T
	\]
	since $T=T^{\pi}$. So $\pi^{-1}\in\S_k^T$. On the other hand, it is easy to see that $\alpha^{-1}=\alpha$. We claim that $\D_T(y)=\D_T({}^\alpha z^{\pi^{-1}})$. To show this, first, let $(i,j)\in\D_T(y)$. So there exists $t\in T$ such that $t_i=c$, $t_j=e$, $y_t=1$. Letting $t^\prime\coloneqq \alpha\circ t\circ \pi^{-1}$, we may write $t=\alpha\circ t^\prime\circ \pi$. In particular, we have the following:
	\begin{enumerate}[(i)]
		\item $t^\prime_{\pi(i)}=\alpha(t_{\pi^{-1}(\pi(i))})=\alpha(t_i)=\alpha(c)$.
		\item $t^{\prime}_{\pi(j)}=\alpha(t_{\pi^{-1}(\pi(j))})=\alpha(t_j)=\alpha(e)$.
		\item ${}^\alpha y_{t^\prime}^{\pi}=y_{\alpha\circ t^{\prime}\circ \pi}=y_t=1$.
	\end{enumerate}
	It follows that $(\pi(i),\pi(j))\in\D_T({}^\alpha y^{\pi})$ if $\alpha=\alpha_1$, and $(\pi(j),\pi(i))\in\D_T({}^\alpha y^{\pi})$ if $\alpha=\alpha_2$. Let us consider the case $\alpha=\alpha_1$. Since $(\pi(i),\pi(j))\in\D_T({}^\alpha y^{\pi})=\D_T(z)$, there exists $t^{\prime\prime}\in T$ such that $t^{\prime\prime}_{\pi(i)}=c$, $t^{\prime\prime}_{\pi(j)}=e$, $z_{t^{\prime\prime}}=1$. Let us define $t^{\prime\prime\prime}\coloneqq \alpha\circ t^{\prime\prime}\circ \pi$ so that $t^{\prime\prime}=\alpha\circ t^{\prime\prime\prime}\circ \pi^{-1}$. In particular, we have the following:
		\begin{enumerate}[(i)]
		\item $t^{\prime\prime\prime}_{i}=\alpha(t^{\prime\prime}_{\pi(i)})=\alpha(c)=c$.
		\item $t^{\prime\prime\prime}_{j}=\alpha(t^{\prime\prime}_{\pi(j)})=\alpha(e)=e$.
		\item ${}^\alpha z_{t^{\prime\prime\prime}}^{\pi^{-1}}=z_{\alpha\circ t^{\prime\prime\prime}\circ \pi^{-1}}=z_{t^{\prime\prime}}=1$.
	\end{enumerate}
	Hence, $(i,j)\in \D_T({}^\alpha z^{\pi^{-1}})$. The case $\alpha=\alpha_2$ can be treated by similar arguments and we obtain $(i,j)\in \D_T({}^\alpha z^{\pi^{-1}})$ as well. Hence, $\D_T(y)\subseteq \D_T({}^\alpha z^{\pi^{-1}})$.
	
	Conversely, let $(i,j)\in \D_T({}^\alpha z^{\pi^{-1}})$. So there exists $t\in T$ such that $t_i=c$, $t_j=e$, ${}^\alpha z^{\pi^{-1}}_t=1$. Let $t^\prime\coloneqq\alpha\circ t \circ \pi^{-1}$. In a similar way as above, we have $t^\prime_{\pi(i)}=\alpha(c)$, $t^{\prime}_{\pi(j)}=\alpha(e)$, $z_{t^\prime}=1$, that is, $(\pi(i),\pi(j))\in \D_T(z)$ if $\a=\a_1$, and $(\pi(j),\pi(i))\in \D_T(z)$ if $\a=\a_2$. Suppose that $\a=\a_1$. In this case, since $(\pi(i),\pi(j))\in \D_T(z)=\D_T({}^\alpha y^\pi)$, there exists $t^{\prime\prime}\in T$ such that $t^{\prime\prime}_{\pi(i)}=c$, $t^{\prime\prime}_{\pi(j)}=e$, ${}^\a y^\pi_{t^{\prime\prime}}=1$. Let $t^{\prime\prime\prime}\coloneqq \alpha\circ t^{\prime\prime}\circ \pi$. Then, we have $t^{\prime\prime\prime}_i=\a(c)=c$, $t^{\prime\prime\prime}_j=\a(e)=e$, $y_{t^{\prime\prime\prime}}=1$ so that $(i,j)\in \D_T(y)$. Similarly, we may show that $(i,j)\in \D_T(y)$ in the case $\a=\a_2$ as well. Hence, $\D_T({}^\alpha z^{\pi^{-1}})\subseteq\D_T(y)$. This completes the proof of $\D_T(y)=\D_T({}^\alpha z^{\pi^{-1}})$. Therefore, $z\equiv y$.
	
	To show that $\equiv$ is transitive, let $y,z,v\in \mathcal{Z}_T(k)$ such that $y\equiv z$ and $z\equiv v$. So $\D_T(z)=\D_T({}^\alpha y^\pi)$ and $\D_T(v)=\D_T({}^\beta z^{\sigma})$ for some $\pi,\sigma\in\S_k^T$ and $\alpha,\beta\in\mathbb{A}^T$. Since $T^\pi=T^{\sigma}=T$, we also have $T^{\sigma\circ \pi}=\cb{t\circ\sigma\circ \pi\mid t\in T}=\cb{\bar{t}\circ \pi \mid \bar{t}\in T}=T$ so that $\sigma\circ\pi\in\S_k^T$. On the other hand, $\alpha\circ\beta $ is either equal to $\alpha_1$ or to $\alpha_2$ so that $\alpha\circ\beta\in\mathbb{A}^T$. We claim that $\D_T(v)=\D_T({}^{\alpha\circ\beta}y^{\sigma\circ\pi})$.
	
	To prove the claim, let $(i,j)\in \D_T(v)$. Since $\D_T(v)=\D_T({}^\beta z^{\sigma})$, there exists $t\in T$ such that $t_i=c$, $t_j=e$, ${}^\beta z^\sigma_t=1$. Letting $t^\prime=\beta\circ t\circ\sigma$, we have $t^\prime_{\sigma^{-1}(i)}\coloneqq\beta(c)$, $t^\prime_{\sigma^{-1}(j)}=\beta(e)$, $z_{t^\prime}=1$. Hence, $(\sigma^{-1}(i),\sigma^{-1}(j))\in\D_T(z)$ if $\beta=\a_1$, and $(\sigma^{-1}(j),\sigma^{-1}(i))\in\D_T(z)$ if $\beta=\a_2$. Suppose that $\beta=\a_1$. Since $(\sigma^{-1}(i),\sigma^{-1}(j))\in\D_T(z)=\D_T({}^\alpha y^\pi)$, there exists $t^{\prime\prime}\in T$ such that $t^{\prime\prime}_{\sigma^{-1}(i)}=c$, $t^{\prime\prime}_{\sigma^{-1}(j)}=e$, ${}^\alpha y^\pi_{t^{\prime\prime}}=1$. Letting $t^{\prime\prime\prime}=\beta\circ t^{\prime\prime}\circ\sigma^{-1}$, we have $t^{\prime\prime\prime}_i=c$, $t^{\prime\prime\prime}_i=e$, ${}^{\alpha\circ\beta}y^{\sigma\circ\pi}_{t^{\prime\prime\prime}}={}^{\a}y^{\pi}_{t^{\prime\prime}}=1$. Hence, $(i,j)\in\D_T({}^{\alpha\circ\beta}y^{\sigma\circ\pi})$. Similarly, we can reach the same conclusion when $\b=\a_2$. So $\D_T(v)\subseteq \D_T({}^{\alpha\circ\beta}y^{\sigma\circ\pi})$. The proof of $ \D_T({}^{\alpha\circ\beta}y^{\sigma\circ\pi})\subseteq \D_T(v)$ is similar, hence we omit it. So we have $y\equiv v$. Therefore, $\equiv$ is an equivalence relation on $\mathcal{Z}_T(k)$.
	\end{proof}

Next, we address the role of Proposition~\ref{equivrel} for computational purposes. Note that the relation $\equiv$ partitions $\mathcal{Z}_T(k)$ into equivalence classes; let us denote them by $\mathcal{Z}_{T,1}(k),\ldots,\mathcal{Z}_{T,A}(k)$, where $A\in\N$ is the number of distinct classes. Algorithm \ref{alg1} shows the precise steps of the main procedure in which these classes are calculated. The two subroutines of this procedure that are used to find the equivalent elements with respect to $\equiv$ are given in Algorithm \ref{alg2} and Algorithm \ref{alg3}. Let $a\in\cb{1,\ldots,A}$ and let $z^{T,a}$ be a fixed representative element of $\mathcal{Z}_{T,a}(k)$. For each $y\in\mathcal{Z}_{T,a}(k)$, the relationship between $|F_{y}(k,\cdot)|$ and $|F_{z^{T,a}}(k,\cdot)|$ is formulated by Theorem \ref{simeqprop}: indeed, letting
\[
w(y,a)\coloneqq \sum_{t\in T}y_{t}-\sum_{t\in T}z^{T,a}_t,
\]
we have
\begin{align}
|F_{y}(k,\ell)|=\begin{cases}
|F_{z^{T,a}}(k,\ell-w(y,a))|&\text{if }\ell,\ell-w(y,a)\in\cb{\ell_0(k),\ldots,2^{k}-2},\\ 
0 &\text{else}.\end{cases}\label{shift}
\end{align}
Therefore, the values $|F_y(k,\cdot)|$ for a given $y\in\mathcal{Z}_{T,a}(k)$ can be calculated by a ``shift" of the values $|F_{z^{T,a}}(k,\cdot)|$ according to \eqref{shift}. This will be illustrated further in Example \ref{k=4ex} and Example \ref{k=5ex} in the next section.

Let us introduce some additional notation that will make the presentation of the examples simpler. First, without loss of generality, we assume that $z^{T,a}$ achieves the minimum possible sum $\sum_{t\in T}y_t$ among all $y\in \mathcal{Z}_{T,a}(k)$; hence $w(y,a)\geq 0$ for each $y\in\mathcal{Z}_{T,a}(k)$ and $w(z^{T,a},a)=0$. Let $\W(a)$ be the set of all possible values of $w(y,a)$, that is, 
\[
\W(a)\coloneqq\cb{w(y,a)\mid y\in\mathcal{Z}_{T,a}(k)},
\]
which is a finite subset of $\Z_+=\cb{0,1,2,\ldots}$. For each $w\in\W(a)$, let us define
\[
\mathcal{Z}_{T,a,w}(k)=\cb{y\in\mathcal{Z}_{T,a}(k)\mid w(y,a)=w}.
\]
Hence, we may partition $\mathcal{Z}_{T,a}(k)$ as
\[
\mathcal{Z}_{T,a}(k)=\bigcup_{w\in\W(a)}\mathcal{Z}_{T,a,w}(k).
\]
We fix one representative $z^{T,a,w}\in \mathcal{Z}_{T,a,w}(k)$ for each $w\in\W(a)$ such that $z^{T,a,0}=z^{T,a}$. Hence, in view of Proposition \ref{PropBranch}, we may write
\[
|F(k,\ell)|=\sum_{a=1}^A \sum_{w\in\W(a)}|F_{z^{T,a,w}}(k,\ell)|\cdot|\mathcal{Z}_{T,a,w}(k)|
\]
for each $\ell\in\cb{\ell_0(k),\ldots,2^k-2}$. We will use these notation and reformulations in the examples of the next section.


	 \algnewcommand{\NULL}{\textsc{null}}
\algnewcommand{\algvar}{\texttt}
\begin{algorithm}
	\caption{Main Algorithm}\label{alg1}
	\begin{algorithmic}[1]
		\State $\bar{\mathcal{D}}_{T} \gets \emptyset $ 
		\For{$y$ \textbf{in} $\mathcal{Z}_{T}(k)$}
		\State $\bar{\mathcal{D}}_{T} \gets \bar{\mathcal{D}}_{T}\cup \mathcal{D}_{T}(y) $
		\EndFor
		\State $asymmetricMatrices \gets \emptyset$
		\State $A \gets 0$
		\For{$y$ \textbf{in} $\mathcal{Z}_{T}(k)$} \Comment{Finding Equivalence Classes}
		\State $symmetryMatrix \gets \algvar{generateSymmetryMatrix}(\bar{\mathcal{D}}_{T}, y, T)$
		\State $a \gets \algvar{findEquivalenceClass}(symmetryMatrix, asymmetricMatrices, \bar{\mathcal{D}}_{T} )$
		\If{$a == \NULL$}
		\State $A\gets A+1$
		\State $\mathcal{Z}_{T,A}(k)\gets \{y\}$
		\State $z^{T,A}\gets y$
		\State $asymmetricMatrices \gets asymmetricMatrices \cup \cb{symmetryMatrix}$
		\Else
		\State $\mathcal{Z}_{T,a}(k)\gets \mathcal{Z}_{T,a}(k)\cup\{y\}$
		\EndIf
		\EndFor
		\For{$a=1:A$}
		\State $\W(a) \gets \emptyset$
		\For{$y$ \textbf{in} $\mathcal{Z}_{T,a}(k)$}
		\State $w \gets \sum_{t\in T}y_t-\sum_{t\in T}z^{T,a}_{t}$
		\If{$w$ \textbf{in} $\W(a)$}
		\State  $\mathcal{Z}_{T, a, w} \gets \mathcal{Z}_{T, a, w} \cup \{y\}$
		\Else
		\State $\W(a) \gets \W(a) \cup \{ w \}$
		\State $\mathcal{Z}_{T, a, w} \gets \{ y \}$
		\State $z^{T,a,w} \gets y$
		\EndIf
		\EndFor
		\EndFor
		\For{$a=1:A$}
		\For{$\ell=1:(2^k-2)$} 
		\State Calculate $|F_{z^{T,a}}(k,\ell)|$ by no-good cuts.\Comment{see text}
		\EndFor 
		\EndFor 
		\State $|F(k,\ell)|\gets 0$
		\For{$a=1:A$}\Comment{Total Number of Solutions}
		\For{$w$ \textbf{in} $\W(a)$  }
		\For{$\ell=\ell_0(k):(2^k-2)$} 
		\If{$\ell\in\{\ell_0(k),\ldots,2^{k}-2\}\textbf{ and }\ell-w\in\cb{\ell_0(k),\ldots,2^k-2}$}
		\State $|F(k,\ell)|\gets|F(k,\ell)|+|F_{z^{T,a}}(k,\ell-w)|\cdot|\mathcal{Z}_{T,a,w}(k)|$
		\EndIf 
		\EndFor 
		\EndFor
		\EndFor
	\end{algorithmic}
\end{algorithm}

\begin{algorithm}
	\caption{\algvar{generateSymmetryMatrix}($\bar{\mathcal{D}}_{T}, y, T$)}\label{alg2}
	\begin{algorithmic}[1]
		\State $symmetryMatrix \gets zeros(|\bar{\mathcal{D}}_{T}|, |\mathbb{S}^T_k|\cdot|\mathbb{A}^T|)$
		\State $A \gets 1$
		\For{$(i,j)$ \textbf{in} $\bar{\mathcal{D}}_{T}$}
		\State $B \gets 1$
		\For{$\pi$ \textbf{in} $\mathbb{S}^T_k$}
		\For{$\alpha$ \textbf{in} $\mathbb{A}^T$}
		\For{$t$ \textbf{in} $T$}
		\If{$(\alpha\circ t\circ \pi)_i == c$ \textbf{and} $(\alpha\circ t\circ \pi)_j == e$ \textbf{and} ${}^\alpha y^\pi_t == 1$}
		\State $symmetryMatrix(A,B) = 1$
		\EndIf
		\EndFor
		\State $B \gets B + 1$
		\EndFor
		\EndFor
		\State $A \gets A + 1$
		\EndFor
		\State \Return symmetryMatrix
	\end{algorithmic}
\end{algorithm}

\begin{algorithm}
	\caption{\algvar{findEquivalenceClass}($symmetryMatrix$, $asymmetricMatrices, \bar{\mathcal{D}}_{T}$)}\label{alg3}
	\begin{algorithmic}[1]
		\State $A \gets 1$
		\For{$M$ \textbf{in} $asymmetricMatrices$}\Comment{\parbox[t]{.5\linewidth} {Checking $\D_T(z)=\D_T({}^\alpha y^\pi)$ for any $\alpha$ and $\pi$ by dot product of their corresponding vectors. All the column vectors inside $symmetryMatrix$ represent $\D_T({}^\alpha y^\pi)$ for different choices of $(\alpha,\pi)$.}} 
		\If{$sum(M) == sum(symmetryMatrix)$ \ldots\\
			\quad \quad \quad \ldots\;  \textbf{and} ($sum(M)/|\bar{\mathcal{D}}_{T}| $ \textbf{exists in} $(M^T * symmetryMatrix)$)} 
		\State \Return $A$            
		\EndIf
		\State $A \gets A+1$
		\EndFor
		\State \Return \NULL
	\end{algorithmic}
\end{algorithm}

\subsection{Computational procedure with no-good cuts}\label{nogoodsec}

With the equivalence relation $\equiv$ introduced in Section \ref{equiv1}, we calculate the cardinalities $|F(k,\cdot)|$ through the equivalence classes $\mathcal{Z}_{T,a}(k)$, $a\in\cb{1,\ldots,A}$, and the corresponding $|F_{z^{T,a}}(k,\cdot)|$ values for a fixed representative $z^{T,a}$ of each class.

It remains to calculate the number of solutions for each equivalence class (as needed in line 21 of Algorithm \ref{alg1}). To that end, let $a\in\cb{1,\ldots,A}$ and $z=z^{T,a}$. In order to calculate $|F_z(k,\ell)|$, we adopt an optimization approach based on the so-called \emph{no-good cuts}. Given a set $F^\ast\subseteq F_z(k,\ell)$, we consider the following integer-linear optimization problem:
\begin{align}
\text{maximize}& \qquad 0\tag{$\mathscr{P}(z,F^\ast)$} \\
\text{subject to}& \qquad \sum_{t\in\T(k)\colon t_i=c, t_j=e}x_t\geq 1\quad \forall i\neq j\ \label{constraint1} \\
\emph{}& \qquad \sum_{t\in\T(k)}x_t=\ell \label{constraint2} \\
\emph{}&\qquad x_t = y_t \quad \forall t\in T\label{constraint5}\\
\emph{}& \qquad \sum_{t\in\T(k)\colon x^*_t=1} (1-x_t) + \sum_{t\in\T(k)\colon x^*_t=0}x_t \quad \geq 1 \quad \forall (x^*_t)_{t\in\T(k)} \in F^* \label{constraint3} \\
\emph{}& \qquad x_t\in \{0,1\} \quad \forall  t\in\T(k) \label{constraint4}
\end{align}
Note that $(\mathscr{P}(z,F^\ast))$ is basically a feasibility problem since we maximize a constant function. Here, constraints \eqref{constraint1}, \eqref{constraint2}, \eqref{constraint4} are the relations in the definition of $F(k,\ell)$; constraint \eqref{constraint5} is the additional requirement in Definition~\ref{DefnOfFm}. Constraint \eqref{constraint3} is the collection of no-good cuts; it makes sure that the new solution $(x_t)_{t\in\T(k)}$ to be found is different from each of the solutions $(x_t^\ast)_{t\in\T(k)}$ that are already stored in $F^\ast$. Indeed, it is not difficult to check that the inequality in \eqref{constraint3} is equivalent to having
\[
\exists t\in\T(k)\colon x_t\neq x^\ast_t
\]
for each $(x^\ast_t)_{t\in\T(k)}\in F^\ast$.
While more general formulations of no-good cuts for continuous variables are nonconvex inequalities, the formulation we use here is for binary variables due to the nature of our problem and it yields a linear inequality. The reader is referred to \cite{nogoodcut} for a detailed treatment of no-good cuts.

The computational procedure is initialized by solving $(\mathscr{P}(z,F^{(1)}))$ with $F^{(1)}\coloneqq \emptyset$. If $F_z(k,\ell)=\emptyset$, then this problem terminates by infeasibility. Otherwise, it yields a $(k,\ell)$-labeling $(x_t^{(1)})_{t\in\T(k)}$ in $F_z(k,\ell)$ as an optimal solution. For each $u\in\N$, in the $(u+1)^{\text{st}}$ iteration, we call $(\mathscr{P}(z,F^{(u+1)}))$ with $F^{(u+1)}\coloneqq F^{(u)}\cup\{(x_t^{(u)})_{t\in\T(k)}\}$. If $|F_z(k,\ell)|\leq u$, then the problem terminates by infeasibility. Otherwise, it yields a $(k,\ell)$-labeling in $F_z(k,\ell)$ that is different from each of the points in $F^{(u)}$. The procedure terminates by infeasibility when $u=|F_z(k,\ell)|$ in which case we have $F_z(k,\ell) = F^{(u)}\cup\{(x_t^{(u)})_{t\in\T(k)}\}$. Hence, we find the cardinality of $F_z(k,\ell)$ by finding all of its elements.

It should be noted that, as the iteration number increases, the number of no-good cuts in the optimization problem increases and it takes more time to compute a new solution. Hence, when deciding on the choice of $T$, it is desirable to control $|F_z(k,\ell)|$ by a reasonable upper bound that depends on $T$. For this purpose, we use the simple upper bound 
	\[
	|F_z(k,\ell)|\leq \binom{2^k-2-\bar{z}}{\ell-\bar{z}},
	\]
	where $\bar{z}\coloneqq\sum_{t\in T}z_t$; however, better upper bounds can be obtained by exploiting the structure of $T$.
	
	To illustrate the computational procedure, in the ‘‘Total" rows of Table \ref{k=4table1} and Table \ref{k=5table1}, we present the $|F(k,\cdot)|$ values for $k=4$ and $k=5$, respectively. From these tables, it is notable that for fixed $k$, the quantity $|F(k,\ell)|$ first increases with $\ell$, then makes a maximum around $\ell=2^{k-1}-1$ and then decreases with $\ell$. Detailed explanations on these tables are provided in the following two examples.
	
	\begin{example}\label{k=4ex}
		Suppose that $k=4$. In this case, we have $\ell_0(k)=4$ and $2^k-2=14$. Hence, we consider the values $\ell\in\{4,\ldots,14\}$. As the branching set of index vectors, we select $T=\{(c,e,e,e),(c,e,c,e),(c,e,e,c),(c,e,c,e)\}$, which corresponds to the regions $A_1 \cap \bar{A}_2 \cap \bar{A}_3 \cap  \bar{A}_4$, $A_1 \cap \bar{A}_2 \cap A_3 \cap \bar{A}_4$, $A_1 \cap \bar{A}_2 \cap \bar{A}_3 \cap A_4$, $A_1 \cap \bar{A}_2 \cap A_3 \cap A_4$. Hence, $|\mathcal{Z}_T(k)|=16$. By Algorithm~\ref{alg1}, we find out that $\equiv$ has $A=6$ equivalence classes. 
		In the header column of Table~\ref{k=4table1}, we use the format $(a,w,s)$ to report the equivalence class index $a$, the value $w\in\W(a)$ that is fixed for the corresponding row, and the cardinality $s=|\mathcal{Z}_{T,a,w}(k)|$ that corresponds to $(a,w)$. For instance, the row of $(5,1,2)$ gives the $|F_{y}(4,\cdot)|$ values for each of the two members of $\mathcal{Z}_{T,5,1}(4)$. The entries in the ‘‘Total" row represent the values $|F(k,\ell)|=\sum_{a=1}^{6}\sum_{w\in\W(a)}|F_{z^{T,a,w}}(k,\ell)|\cdot|\mathcal{Z}_{T,a,w}(k)|$ for all $\ell\in\cb{\ell_0(k),\ldots,2^k-2}$. In Table~\ref{k=4table1}, the rows corresponding to the same equivalence class are placed consecutively and shaded with the same color; we alternate between two colors as the class index $a$ changes. The representatives $z^{T,a,w}$, $a\in\cb{1,\ldots,A}$, $w\in\W(a)$, are listed in Table~\ref{k=4table2}. For instance, the line for $(5,1)$ states that
		\[
		z^{T,5,1}_{(c,e,e,e)}=1,\quad z^{T,5,1}_{(c,e,c,e)}=1, \quad z^{T,5,1}_{(c,e,e,c)}=1,\quad z^{T,5,1}_{(c,e,c,e)}=0.
		\]
		\end{example}

 \begin{table}[t]
 		\centering
 	\resizebox{\textwidth}{!}{%
	\begin{tabular}
		{|c|*{12}{p{0.8cm}|}}
		\hline 
		\rowcolor{lightgray}
		\rule{-3.5pt}{3.5ex}
		\diagbox{\small{$(a,w,s)$}}{\small{$\ell$}} &$4$ &$5$ &$6$ &$7$ &$8$ &$9$ &$10$ &$11$ &$12$ &$13$ &$14$  \\
		\cline{1-12} 
		
		$\cellcolor{lightgray}(1,0,2)$ &\cellcolor{lightred}$4$ &\cellcolor{lightred}$40$ &\cellcolor{lightred}$115$ &\cellcolor{lightred}$146$ &\cellcolor{lightred}$103$ &\cellcolor{lightred}$43$ &\cellcolor{lightred}$10$ &\cellcolor{lightred}$1$ &\cellcolor{lightred}$0$ &\cellcolor{lightred}$0$ &\cellcolor{lightred}$0$  \\ \cline{1-12}
		
		$\cellcolor{lightgray}(2,0,2)$ &\cellcolor{lightgreen}$6$ &\cellcolor{lightgreen}$47$ &\cellcolor{lightgreen}$120$ &\cellcolor{lightgreen}$147$ &\cellcolor{lightgreen}$103$ &\cellcolor{lightgreen}$43$ &\cellcolor{lightgreen}$10$ &\cellcolor{lightgreen}$1$ &\cellcolor{lightgreen}$0$ &\cellcolor{lightgreen}$0$ &\cellcolor{lightgreen}$0$ \\ \cline{1-12}
		
		$\cellcolor{lightgray}(3,0,4)$ &\cellcolor{lightred}$0$ &\cellcolor{lightred}$14$ &\cellcolor{lightred}$77$ &\cellcolor{lightred}$159$ &\cellcolor{lightred}$172$ &\cellcolor{lightred}$111$ &\cellcolor{lightred}$44$ &\cellcolor{lightred}$10$ &\cellcolor{lightred}$1$ &\cellcolor{lightred}$0$ &\cellcolor{lightred}$0$ \\ \cline{1-12}
		
		$\cellcolor{lightgray}(4,0,1)$ &\cellcolor{lightgreen}$1$ &\cellcolor{lightgreen}$20$ &\cellcolor{lightgreen}$94$ &\cellcolor{lightgreen}$184$ &\cellcolor{lightgreen}$191$ &\cellcolor{lightgreen}$118$ &\cellcolor{lightgreen}$45$ &\cellcolor{lightgreen}$10$ &\cellcolor{lightgreen}$1$ &\cellcolor{lightgreen}$0$ &\cellcolor{lightgreen}$0$ \\ \cline{1-12}
		
		$\cellcolor{lightgray}(5,0,1)$ &\cellcolor{lightred}$4$ &\cellcolor{lightred}$44$ &\cellcolor{lightred}$141$ &\cellcolor{lightred}$222$ &\cellcolor{lightred}$205$ &\cellcolor{lightred}$120$ &\cellcolor{lightred}$45$ &\cellcolor{lightred}$10$ &\cellcolor{lightred}$1$ &\cellcolor{lightred}$0$ &\cellcolor{lightred}$0$ \\ \cline{1-12}
		
		$\cellcolor{lightgray}(5,1,2)$ &\cellcolor{lightred}$0$ &\cellcolor{lightred}$4$ &\cellcolor{lightred}$44$ &\cellcolor{lightred}$141$ &\cellcolor{lightred}$222$ &\cellcolor{lightred}$205$ &\cellcolor{lightred}$120$ &\cellcolor{lightred}$45$ &\cellcolor{lightred}$10$ &\cellcolor{lightred}$1$ &\cellcolor{lightred}$0$ \\ \cline{1-12}
		
		$\cellcolor{lightgray}(5,2,1)$ &\cellcolor{lightred}$0$ &\cellcolor{lightred}$0$ &\cellcolor{lightred}$4$ &\cellcolor{lightred}$44$ &\cellcolor{lightred}$141$ &\cellcolor{lightred}$222$ &\cellcolor{lightred}$205$ &\cellcolor{lightred}$120$ &\cellcolor{lightred}$45$ &\cellcolor{lightred}$10$ &\cellcolor{lightred}$1$  \\ \cline{1-12}
		
		$\cellcolor{lightgray}(6,0,2)$ &\cellcolor{lightgreen}$0$ &\cellcolor{lightgreen}$1$ &\cellcolor{lightgreen}$30$ &\cellcolor{lightgreen}$117$ &\cellcolor{lightgreen}$203$ &\cellcolor{lightgreen}$198$ &\cellcolor{lightgreen}$119$ &\cellcolor{lightgreen}$45$ &\cellcolor{lightgreen}$10$ &\cellcolor{lightgreen}$1$ &\cellcolor{lightgreen}$0$ \\ \cline{1-12}
					\hline
		\rowcolor{lightgray}
		Total &$25$ &$304$ &$1165$ &$2188$ &$2487$ &$1882$ &$989$ &$364$ &$91$ &$14$ &$1$  \\ \cline{1-12}
		\hline 
	\end{tabular}}
	\caption{$|F_{z^{T,a,w}}(4, \ell)|$ values for $\ell_0(k) \leq \ell\leq 2^k-2 $ in Example \ref{k=4ex}} \label{k=4table1}
\end{table}

\begin{table}[t]
	\resizebox{\textwidth}{!}{%
	\begin{tabular}{|>{\columncolor{lightgray}}P{2.2cm}|P{3.2cm}|P{3.2cm}|P{3.2cm}|P{3.2cm}|}
		\hline
		\rowcolor{lightgray}
	    \rule{-4pt}{1ex}
		\diagbox{\small{$(a,w)$}}{\small{$T$}} &$ A_1 \cap \bar{A}_2 \cap \bar{A}_3 \cap  \bar{A}_4$ 
		&$A_1 \cap \bar{A}_2 \cap A_3 \cap \bar{A}_4$
		&$A_1 \cap \bar{A}_2 \cap \bar{A}_3 \cap A_4$
		&$A_1 \cap \bar{A}_2 \cap A_3 \cap A_4$ \\
		\hline
		$(1,0)$ & \cellcolor{lightgreen}$1$ & \cellcolor{lightred}$0$ &\cellcolor{lightred} $0$ & \cellcolor{lightred}$0$ \\
		\hline
		$(2,0)$ & \cellcolor{lightred}$0$ &\cellcolor{lightgreen} $1$ &\cellcolor{lightred} $0$ & \cellcolor{lightred}$0$ \\
		\hline
		$(3,0)$ & \cellcolor{lightgreen}$1$ &\cellcolor{lightgreen} $1$ &\cellcolor{lightred} $0$ &\cellcolor{lightred} $0$  \\
		\hline
		$(4,0)$ & \cellcolor{lightgreen}$1$ & \cellcolor{lightred}$0$ & \cellcolor{lightred}$0$ &\cellcolor{lightgreen} $0$ \\
		\hline
		$(5,0)$ &\cellcolor{lightred} $1$ &\cellcolor{lightgreen} $0$ &\cellcolor{lightgreen} $0$ & \cellcolor{lightred}$0$ \\
		\hline
		$(5,1)$ &\cellcolor{lightgreen}$1$ &\cellcolor{lightgreen} $1$ & \cellcolor{lightgreen}$1$ & \cellcolor{lightred}$0$ \\
		\hline
		$(5,2)$ &\cellcolor{lightgreen} $1$ & \cellcolor{lightgreen}$1$ & \cellcolor{lightgreen}$1$ & \cellcolor{lightgreen}$1$ \\
		\hline
		$(6,0)$ &\cellcolor{lightgreen} $1$ &\cellcolor{lightgreen} $1$ & \cellcolor{lightred}$0$ & \cellcolor{lightgreen}$1$ \\
		\hline
	\end{tabular}
}
	\caption{Definitions of the representatives $z^{T,a,w}$ in Example \ref{k=4ex}}\label{k=4table2}
\end{table}

\begin{example}\label{k=5ex}
	Suppose that $k=5$. In this case, we have $\ell_0(k)=4$ and $2^k-2=30$. As the branching set of index vectors, we fix $T=\cb{t^1,\ldots,t^{15}}$ and the precise definitions of $t^1,\ldots,t^{15}$ are given in Table \ref{k=5table3}. For instance, $t^7=(c,e,c,e,e)$. We have $|\mathcal{Z}_T(k)|=2^{15}$ and there are $A=28$ equivalence classes of $\equiv$. Table \ref{k=5table1} provides the $|F_{z^{T,a,w}}(5,\ell)|$ values and it is presented in two pages. Its structure is the same as that of Table \ref{k=4table1}. The definitions of the representatives $z^{T,a,w}$, $a\in\cb{1,\ldots,28}$, $w\in\W(a)$, are given in Table \ref{k=5table2}.
	
	Table \ref{k=5table1} illustrates the tremendous reduction in computational effort provided by the symmetries encoded in $\equiv$. For each value of $a$, we only compute the values $|F_{z^{T,a,w}}(k,\cdot)|$ for $w=0$, that is, the values in the first row for $a$; then, for $w>0$, the rows are obtained by simple shifts of the row $w=0$. The row of $(a,w)$ is repeated $s=|\mathcal{Z}_{T,a,w}(k)|$ times in the ultimate value of $|F(k,\ell)|$. For instance, in a brute-force calculation without using symmetries, the values in the row for $(27,4)$ would have been calculated $s=4055$ times, which is avoided due to the structure provided by $\equiv$.
\end{example}

\section{Application to reliability theory}\label{reliability}

In this section, we illustrate the use of constructive covers in reliability theory.

In the setting of \cite{barlow}, let us consider a \emph{multi-state coherent system} with $n\in\N$ components forming a set $\NN$. Without loss of generality, let us write $\NN=\cb{1,\ldots,n}$. Each component $p\in\NN$ has a state variable $z_p$ taking values in the set $\mathcal{S}\coloneqq\cb{0,1,\ldots,s}$, where $s\in\N$ is fixed for all components. Then, the states of all components can be expressed as a vector $z=(z_1,\ldots,z_n)\in\mathcal{S}^n$. In classical reliability theory, the typical systems are binary, that is, one takes $\mathcal{S}=\cb{0,1}$. In such systems, the state $1$ corresponds to the ``functioning" state and $0$ corresponds to the ``failure" state. Hence, the general case with $s+1$ states can be used to model varying levels of perfection for how well the components operate.

The structure of the system is encoded by a function $\phi\colon\mathcal{S}^n\to\mathcal{S}$ such that $\phi(z)=\phi(z_1,\ldots,z_n)$ gives the state of the overall system when component $p$ is at state $z_p$ for each $p\in\NN$. We call $\phi$ the \emph{structure function} of the system. For instance, a \emph{parallel} system is defined as a system whose structure function is given by $\phi(z)=\max\cb{z_1,\ldots,z_n}$ for each $z\in\mathcal{S}^n$. Similarly, a \emph{series} system has the structure function $\phi(z)=\min\cb{z_1,\ldots,z_n}$ for each $z\in\mathcal{S}^n$. In general, a \emph{coherent system} is a system which is built as a nested structure of parallel and series systems. More precisely, let $k\in\cb{1,\ldots,n}$ and consider distinct sets $P_1,\ldots,P_k\subseteq\NN$ satisfying the following properties: the sets are not strict subsets of each other, and $\bigcup_{i=1}^k P_i=\NN$. Then, the structure function of a coherent system described with these sets is defined by
\begin{equation}\label{phipath}
\phi(z)=\max_{i\in\cb{1,\ldots,k}}\min_{p\in P_i}z_p
\end{equation}
for each $z\in\mathcal{S}^n$. The sets $P_1,\ldots,P_k$ are called the \emph{minimal path sets} of the system.

Note that the above requirements for the minimal path sets of the system are precisely the conditions in the definition of constructive ordered $k$-cover (Definition~\ref{kcover}) except that the order of the sets is not important. Hence, in view of Remark~\ref{unordered}, every constructive unordered $k$-cover of $\NN$ corresponds to a system design with $k$ minimal path sets and $n$ components, and vice versa. By Corollary~\ref{ell0forcounting}, the number of such system designs is given by
\[
\frac{|C(\NN,k)|}{k!}=\sum_{\ell=\ell_0}^{(2^k-2)\wedge n}\frac{\ell!}{k!}\tilde{s}(n,\ell) |F(k,\ell)|.
\]

Given a coherent system with minimal path sets $P_1,\ldots,P_k$, by following a certain procedure, one can construct the so-called \emph{minimal cut sets} $C_1,\ldots,C_r\subseteq\NN$ with $r\in\cb{1,\ldots,n}$ satisfying the following properties: the sets are not strict subsets of each other, $\bigcup_{j=1}^r C_j=\NN$ and each minimal cut set has a nonempty intersection with each minimal path set. It can be shown that \cite[Proposition~1.1]{barlow} the structure function of the system can also be computed by the formula
\begin{equation}\label{phicut}
\phi(z)=\min_{j\in\cb{1,\ldots,r}}\max_{p\in C_j}z_p
\end{equation}
for each $z\in\mathcal{S}^n$.

\begin{landscape}
\begin{table}[b]
	\centering
\begin{adjustbox}{width=1.4\textwidth,totalheight=0.92\textheight-2\baselineskip}

	\caption{Definition of the set $T$ in Example \ref{k=5ex}}\label{k=5table3}
\end{table}

Let us comment on the alternative ways of calculating the state of the system given in \eqref{phipath} and \eqref{phicut}. We observe that the system is in the failure state if there is at least one minimal cut set in which every component is in the failure state, and the system is in a functioning state if there is at least one minimal path set in which every component is in a functioning state. Indeed, let $z\in\mathcal{S}^n$ and $\bar{s}\in\mathcal{S}$. Thanks to \eqref{phipath}, we have $\phi(z)\leq \bar{s}$ if and only if, for every minimal path set $P_i$, there is at least one component $p\in P_i$ with $z_p \leq \bar{s}$. Conversely, thanks to \eqref{phicut}, we have $\phi(z)>\bar{s}$ if and only if, for every minimal cut set $C_j$, there is at least one component $p\in C_j$ with $z_p>\bar{s}$.

As mentioned above, the minimal cut sets corresponding to a collection of minimal path sets can be constructed by a certain procedure. To be able to describe this procedure, we review the notion of minimality next. Let $\tilde{\mathscr{C}}=\{\tilde{C}_1,\ldots,\tilde{C}_{\tilde{r}}\}$ be a collection of distinct subsets of $\NN$ with $\tilde{r}\in\cb{1,\ldots,n}$. For $i\in\cb{1,\ldots,\tilde{r}}$, the set $\tilde{C}_i$ is called a \emph{dominated element} of $\tilde{\mathscr{C}}$ if there is $i^\prime\in\cb{1,\ldots,\tilde{r}}\sm\cb{i}$ such that $\tilde{C}_{i^\prime}\subseteq \tilde{C}_i$; in this case, $\tilde{C}_i$ is also said to be \emph{dominated by} $\tilde{C}_{i^\prime}$. $\tilde{C}_k$ is called a \emph{minimal element} of $\tilde{\mathscr{C}}$ if it is not a dominated element. Since $\tilde{\mathscr{C}}$ is a finite collection, a minimal element of it always exists. Moreover, it is easy to observe that every dominated element is dominated by at least one minimal element. It is easy to see that $\tilde{\mathscr{C}}$ is an constructive unordered $\tilde{r}$-cover of $\NN$ if and only if every set in $\tilde{\mathscr{C}}$ is a minimal element of $\tilde{\mathscr{C}}$ and $\bigcup_{i=1}^{\tilde{r}} \tilde{C}_i=\NN$.

Let $\mathscr{P}=\cb{P_1,\ldots,P_k}$ be a constructive unordered $k$-cover of $\NN$. A set $C\subseteq \cb{1,\ldots,n}$ is called an \emph{intersector} of $\mathscr{P}$ if there exists a surjective function $f\colon\cb{1,\ldots,k}\to C$ such that $f(i)\in P_i$ for every $i\in\cb{1,\ldots,k}.$ In other words, an intersector chooses one component from each set in $\mathscr{P}$ but the same component can be chosen from multiple sets. Consequently, for an intersector $C$ of $\mathscr{P}$, it holds $C\cap P_i\neq \emptyset$ for each $i\in\cb{1,\ldots,k}$ but the cardinality of such an intersection may exceed one, in general. Moreover, for every component $p\in\NN$, there exists an intersector $C$ of $\mathscr{P}$ such that $p\in C$. Let us denote by $\mathscr{C}_0$ the collection of all intersectors of $\mathscr{P}$ and by $\mathscr{C}$ the collection of all minimal elements of $\mathscr{C}_0$. Let us write
\begin{equation}\label{minint}
\mathscr{C}=\cb{C_1,\ldots,C_r},
\end{equation}
where $r=|\mathscr{C}|$.

\begin{proposition}
	The collection $\mathscr{C}$ given in \eqref{minint} is a constructive unordered $r$-cover of $\NN$.
\end{proposition}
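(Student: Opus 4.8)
The plan is to invoke the characterization stated just before the proposition: a collection $\tilde{\mathscr{C}}$ is a constructive unordered cover of $\NN$ if and only if every set in $\tilde{\mathscr{C}}$ is a minimal element of $\tilde{\mathscr{C}}$ and the sets cover $\NN$. Applying this to $\mathscr{C}=\{C_1,\ldots,C_r\}$, I would reduce the proposition to two claims: (a) no member of $\mathscr{C}$ contains another member, and (b) $\bigcup_{i=1}^r C_i=\NN$.

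Claim (a) is immediate from the construction. Each $C\in\mathscr{C}$ is by definition a minimal element of $\mathscr{C}_0$, and $\mathscr{C}\subseteq\mathscr{C}_0$; so if some $C'\in\mathscr{C}$ satisfied $C'\subsetneq C$, then $C$ would be dominated in $\mathscr{C}_0$, contradicting its minimality. Hence $\mathscr{C}$ is an antichain, which is exactly the statement that every set in $\mathscr{C}$ is a minimal element of $\mathscr{C}$.

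The substance is Claim (b): every $p\in\NN$ lies in some minimal intersector. The naive attempt---taking an arbitrary intersector containing $p$ (which exists by the remark preceding the proposition) and shrinking it to a minimal one---fails, since $p$ may be redundant and get discarded along the way. Instead I would first build an intersector in which $p$ is \emph{indispensable}. As $\NN=\bigcup_i P_i$, fix $i_0$ with $p\in P_{i_0}$. For each $i\neq i_0$, the constructive-cover property of $\mathscr{P}$ gives $P_i\setminus P_{i_0}\neq\emptyset$, so I may choose $c_i\in P_i\setminus P_{i_0}$; setting $c_{i_0}:=p$ and $C:=\{c_i\mid i\in\{1,\ldots,k\}\}$, the map $i\mapsto c_i$ is a surjection onto $C$ with $c_i\in P_i$, so $C$ is an intersector, and by construction $C\cap P_{i_0}=\{p\}$.

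To finish I would let $D$ be an inclusion-minimal element of the nonempty finite collection $\{D'\subseteq C\mid D'\text{ is an intersector}\}$. Any intersector properly contained in $D$ would also be an intersector contained in $C$, contradicting the minimality of $D$; hence $D$ is a minimal element of $\mathscr{C}_0$, i.e.\ $D\in\mathscr{C}$. Finally, $D$ being an intersector forces $D\cap P_{i_0}\neq\emptyset$, while $D\cap P_{i_0}\subseteq C\cap P_{i_0}=\{p\}$, so $p\in D$. Thus $p$ lies in the member $D$ of $\mathscr{C}$, which proves (b). The only genuine obstacle is precisely this point---producing a minimal intersector through a prescribed component---and the device that overcomes it is forcing $p$ to be the unique element of $C$ meeting $P_{i_0}$, which is exactly where the hypothesis that the $P_i$ are pairwise non-nested is used.
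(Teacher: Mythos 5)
Your proof is correct, and it takes a genuinely different route from the paper's. The reduction is identical on both sides: each member of $\mathscr{C}$ is minimal in $\mathscr{C}_0$, hence in $\mathscr{C}$ (your claim (a) is the paper's first sentence), and everything hinges on showing that every $p\in\NN$ lies in some \emph{globally} minimal intersector. The paper attacks this by introducing the collection $\mathscr{C}_0(p)$ of intersectors containing $p$ and its minimal elements $\mathscr{C}(p)$, and tries to prove the inclusion $\mathscr{C}(p)\subseteq\mathscr{C}$ by contradiction, rerouting a surjection for a dominating set $C_{j(\a)}\subseteq K_\a$ through $p$. You instead manufacture one intersector $C$ in which $p$ cannot be discarded, by picking $c_i\in P_i\sm P_{i_0}$ for $i\neq i_0$ so that $C\cap P_{i_0}=\cb{p}$, and then pass to an inclusion-minimal intersector $D\subseteq C$; such a $D$ is automatically minimal in all of $\mathscr{C}_0$ and must meet $P_{i_0}$, hence contains $p$. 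Your route buys two things. First, it isolates exactly where the non-nesting hypothesis $P_i\sm P_j\neq\emptyset$ enters (it is truly needed: with $P_1=\cb{1}\subseteq P_2=\cb{1,2}$ the unique minimal intersector is $\cb{1}$, which does not cover). Second, it proves only what is needed, namely one minimal intersector through $p$, whereas the paper's intermediate claim $\mathscr{C}(p)\subseteq\mathscr{C}$ is stronger and in fact fails in general: for $P_1=\cb{1,2}$, $P_2=\cb{1,3}$, $p=2$, the set $\cb{1,2}$ is minimal among intersectors containing $2$ but is dominated by the intersector $\cb{1}$; correspondingly, the paper's unproved assertion $C_{j(\a)}\sm g(\cb{1,\ldots,k})\neq\emptyset$ is false there, since $g$ has image $\cb{1,2}\supseteq C_{j(\a)}=\cb{1}$, so no contradiction arises. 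So your forced-membership device is not merely an alternative; it sidesteps a genuine gap in the published argument.
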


\begin{proof}
	We first note that, by construction, every set in $\mathscr{C}$ is a minimal element of $\mathscr{C}_0$, hence it is a minimal element of $\mathscr{C}$ as well. It remains to show that $\bigcup_{j=1}^r C_j =\NN$. The $\subseteq$ part is obvious since $C_1,\ldots,C_r$ are subsets of $\NN$. To prove the $\supseteq$ part, let us fix $p\in\NN$. Let us denote by $\mathscr{C}_0(p)$ the collection of all intersectors of $\mathscr{P}$ containing $p$ as an element and by $\mathscr{C}(p)$ the collection of all minimal elements of $\mathscr{C}_0(p)$. Let us write $\mathscr{C}(p)=\cb{K_1,\ldots,K_q}$ with $q\in\N$ as a collection of distinct sets. To conclude the proof, it suffices to show that $\mathscr{C}(p)\subseteq\mathscr{C}$. To that end, we let $\a\in\cb{1,\ldots,q}$ and show that $K_\a\in\mathscr{C}$. To get a contradiction, suppose that $K_\a\notin\mathscr{C}$. Note that $\mathscr{C}(p)\subseteq\mathscr{C}_0$. Hence, the supposition is equivalent to that, $K_\a$ is a dominated element of $\mathscr{C}_0$ so that there exists $j(\a)\in\cb{1,\ldots,r}$ such that $C_{j(\a)}\subseteq K_\a$. Moreover, such $C_{j(\a)}$ does not contain $p$ as an element as otherwise $C_{j(\a)}$ would be a set in $\mathscr{C}_0(p)$, which would contradict the minimality of $K_j$ in $\mathscr{C}_0(p)$. Since $C_{\a(j)}$ is an intersector of $\mathscr{P}$, there exists a surjective function $f\colon\cb{1,\ldots,k}\to C_{j(\a)}$ such that $f(i)\in P_i$ for every $i\in\cb{1,\ldots,k}$. Define a function $g\colon\cb{1,\ldots,k}\to \NN$ by letting $g(i)=p$ if $p\in P_{i} $ and $g(i)=f(i)$ if $p\notin P_i$. Since $\mathscr{P}$ is constructive, there exists $i^\prime$ such that $p\in P_{i^\prime}$. Consequently, the image $g(\cb{1,\ldots,k})$ of $g$ is an intersector of $\mathscr{P}$ containing $p$ as an element, $C_{j(\a)}\sm g(\cb{1,\ldots,m})\neq \emptyset$, and $g(\cb{1,\ldots,k})\subseteq C_{j(\a)}\cup\cb{p}\subseteq K_j$. Hence, $g(\cb{1,\ldots,k})\in\mathscr{C}_0(p)$ and the minimality of $K_j$ in $\mathscr{C}_0(p)$ implies $C_{j(\a)}\subseteq K_j=g(\cb{1,\ldots,k})$, which contradicts $C_{j(\a)}\sm g(\cb{1,\ldots,k})\neq \emptyset$. Therefore, $K_j\in\mathscr{C}$.
\end{proof}

As an alternative construction, one can start with a constructive unordered $r$-cover $\{C_1,\ldots,C_r\}$ of $\NN$ to be used as the collection of the minimal cut sets of the system, that is, the structure function is defined by \eqref{phicut}. Then, the corresponding minimal path sets can be constructed by the above procedure, which guarantees that each minimal path set has a nonempty intersection with each minimal cut set. The number of all system designs with $r$ minimal cut sets is given by
\[
\frac{|C(\NN,r)|}{r!}=\sum_{\ell=\ell_0(k)}^{(2^r-2)\wedge n}\frac{\ell!}{r!}\tilde{s}(n,\ell) |F(r,\ell)|.
\]
We finish this section by providing two exact calculations of the above quantity based on the earlier calculations.

\begin{example}
	Using Example\ref{k=4ex}, we calculate the number of all system designs with $n=7$ components and $r=4$ minimal cut sets as
	\begin{align*}
	\frac{|C(\NN,r)|}{r!}&=\frac{1}{24}\sum_{\ell=4}^{7}\ell!\tilde{s}(7,\ell) |F(4,\ell)|\\
	&=\frac{1}{24}\of{4!\tilde{s}(7,4)|F(4,4)|+5!\tilde{s}(7,5)|F(4,5)|+6!\tilde{s}(7,6)|F(4,6)|+7!\tilde{s}(7,7)|F(4,7)|}\\
	&=\frac{1}{24}\of{4!\cdot 1050\cdot 25+5!\cdot 266\cdot 304 + 6!\cdot 28\cdot 1165+7!\cdot 1\cdot 2188}\\
	&=1,868,650.
	\end{align*}
	\end{example}

\begin{example}
	Using Example\ref{k=5ex}, we calculate the number of all system designs with $n=9$ components and $r=5$ minimal cut sets as
	\begin{align*}
	\frac{|C(\NN,r)|}{r!}&=\frac{1}{120}\sum_{\ell=4}^{9}\ell!\tilde{s}(9,\ell) |F(5,\ell)|\\
	&=\frac{1}{120}(4!\tilde{s}(9,4)|F(5,4)|+5!\tilde{s}(9,5)|F(5,5)|+6!\tilde{s}(9,6)|F(5,6)|+7!\tilde{s}(9,7)|F(5,7)|\\
	&\quad \quad\quad +8!\tilde{s}(9,8)|F(5,8)|+9!\tilde{s}(9,9)|F(5,9)|)\\
	&=\frac{1}{120}(4!\cdot42525\cdot 30 + 5!\cdot 22827\cdot 2026+6!\cdot 5880\cdot 41430+7!\cdot 750 \cdot 376350\\
	&\quad \quad\quad +8!\cdot 45 \cdot 2003655+9!\cdot 1 \cdot 7286000)\\
	&=65,691,305,652.
	\end{align*}
\end{example}

\section{Conclusion}\label{conc}

In this paper, we consider the problem of counting the number of ways that one can cover a finite set by a given number of subsets with the additional requirement that every pair of these subsets has a nonempty set difference. It turns out that this seemingly simple problem requires a deep enumeration argument which results in two auxiliary problems: calculating ISNs and counting certain labelings of the disjoint regions that are induced by a cover. While the calculation of ISNs can be handled by a simple recursive relation, we solve the labeling problem by a more sophisticated method that exploits certain symmetries available in the set of labelings and uses no-good cuts from optimization literature. As the numerical examples illustrate, even for small values of $k$, the number of subsets in the cover, one has to calculate very large cardinalities due to exponential growth. The enhancement of this method for larger values of $k$ as well as the asymptotic analysis of the overall problem are subjects of future research.

\section{Appendix: Proofs of the results in Section~\ref{prelim}}\label{app}

\begin{proof}[Proof of Proposition~\ref{workhorse}]
	Using binomial expansion, we obtain
	\begin{align*}
	\tilde{s}(n,\ell)&=\frac{1}{\ell!}\sum_{j=0}^\ell (-1)^{\ell-j}\binom{\ell}{j}(j+1)^n=\frac{1}{\ell!}\sum_{j=0}^\ell (-1)^{\ell-j}\binom{\ell}{j}\sum_{i=0}^n\binom{n}{i}j^i\\
	&=\frac{1}{\ell!}\sum_{i=0}^n\binom{n}{i}\sum_{j=0}^\ell (-1)^{\ell-j}\binom{\ell}{j}j^i=\frac{1}{\ell!}\sum_{i=1}^n\binom{n}{i}\sum_{j=0}^\ell (-1)^{\ell-j}\binom{\ell}{j}j^i=\sum_{i=1}^n\binom{n}{i}s(i,\ell).
	\end{align*}
	Here, to get the penultimate equality, we use the fact that, for $i=0$,
	\[
	\sum_{j=0}^\ell (-1)^{\ell-j}\binom{\ell}{j}j^i=\sum_{j=0}^\ell (-1)^{\ell-j}(+1)^j\binom{\ell}{j}=0.
	\]
	Hence, \eqref{stilderes} follows. If $n<\ell$, then \eqref{Stirlingzero} implies
	\[
	\tilde{s}(n,\ell)=\sum_{i=1}^n\binom{n}{i}s(i,\ell)=\sum_{i=1}^n\binom{n}{i}0=0.
	\]
	On the other hand, if $n\geq \ell$, then we have $s(i,\ell)=0$ for each $i\in\cb{1,\ldots,\ell-1}$ so that
	\[
	\tilde{s}(n,\ell)=\sum_{i=\ell}^n\binom{n}{i}s(i,\ell)=\sum_{i=0}^{n-\ell}\binom{n}{i}s(n-i,\ell).
	\]
	Hence, \eqref{stildeinterpret} follows. In particular, $\tilde{s}(n,n)=\binom{n}{0} s(n,n)=1$. Finally, by \eqref{stilderes} together with (i) and (iii), we have
	\[
	\tilde{s}(n,1)=\sum_{i=1}^n \binom{n}{i}s(i,1)=\sum_{i=1}^n \binom{n}{i}=2^n-1.
	\] 
\end{proof}

\begin{proof}[Proof of Proposition~\ref{stilderec}]
	Let $n\in\N\sm\cb{1}$ and $\ell\in\cb{2,\ldots,n}$. By elementary calculations, we obtain
	\begin{align*}
	&(\ell+1)\tilde{s}(n,\ell)+\tilde{s}(n,\ell-1)\\
	&=\frac{\ell+1}{\ell!}\sum_{j=0}^\ell (-1)^{\ell-j}\binom{\ell}{j}(j+1)^n+\frac{1}{(\ell-1)!}\sum_{j=0}^{\ell-1} (-1)^{\ell-1-j}\binom{\ell-1}{j}(j+1)^n\\
	&=\frac{\ell+1}{\ell!}\sum_{j=0}^{\ell-1} (-1)^{\ell-j}\binom{\ell}{j}(j+1)^n+\frac{\ell+1}{\ell!}(\ell+1)^n+\frac{1}{(\ell-1)!}\sum_{j=0}^{\ell-1} (-1)^{\ell-1-j}\binom{\ell-1}{j}(j+1)^n\\
	&=\frac{\ell+1}{\ell}\frac{1}{(\ell-1)!}\sum_{j=0}^{\ell-1} (-1)^{\ell-j}\binom{\ell}{j}(j+1)^n+\frac{\ell+1}{\ell!}(\ell+1)^n+\frac{(-1)^{-1}}{(\ell-1)!}\sum_{j=0}^{\ell-1} (-1)^{\ell-j}\binom{\ell-1}{j}(j+1)^n\\
	&=\frac{1}{(\ell-1)!}\sum_{j=0}^{\ell-1} (-1)^{\ell-j}(j+1)^n\of{\frac{\ell+1}{\ell}\binom{\ell}{j}-\binom{\ell-1}{j}}+\frac{\ell+1}{\ell!}(\ell+1)^n\\
	&=\frac{1}{(\ell-1)!}\sum_{j=0}^{\ell-1} (-1)^{\ell-j}(j+1)^n\binom{\ell}{j}\of{\frac{\ell+1}{\ell}-\frac{\ell-j}{\ell}}+\frac{\ell+1}{\ell!}(\ell+1)^n\\
	&=\frac{1}{\ell!}\sum_{j=0}^{\ell-1} (-1)^{\ell-j}\binom{\ell}{j}(j+1)^{n+1}+\frac{1}{\ell!}(\ell+1)^{n+1}=\frac{1}{\ell!}\sum_{j=0}^{\ell} (-1)^{\ell-j}\binom{\ell}{j}(j+1)^{n+1}=\tilde{s}(n+1,\ell),
	\end{align*}
	as desired. The boundary conditions are given by (iii), (iv) of Proposition~\ref{workhorse}.
\end{proof}

\bibliographystyle{named}

\end{document}